\documentclass[11pt]{amsart}
\usepackage[margin=1.05in]{geometry}
\usepackage{amsmath,amssymb,amsthm}
\usepackage{hyperref}
\usepackage{microtype}

\hypersetup{hidelinks}

\newtheorem{theorem}{Theorem}[section]
\newtheorem{lemma}[theorem]{Lemma}
\newtheorem{corollary}[theorem]{Corollary}

\newcommand{\N}{\mathbb{N}}
\newcommand{\Z}{\mathbb{Z}}
\newcommand{\F}{\mathbb{F}}

\DeclareMathOperator{\Span}{span}

\title{The Sidon Decomposition Problem in Abelian Groups of Bounded Torsion}
\author{Mark Lewko}
\address{Lebanon, New Hampshire USA}
\email{mlewko@gmail.com}

\date{}

\begin{document}

\begin{abstract}
Let \(G\) be a compact abelian group whose dual group \(\Gamma=\widehat{G}\) has bounded torsion. In 1967, Malliavin-Brameret and Malliavin proved that every Sidon set in \(\Gamma\) is a finite union of quasi-independent sets when \(\Gamma\) has prime exponent. This was later extended to squarefree exponents in work of Varopoulos and Bourgain. We prove the remaining bounded-torsion case. Consequently, if \(\widehat{G}\) has bounded torsion, then a subset \(\Lambda\subset \widehat{G}\setminus\{0\}\) is Sidon if and only if it is a finite union of quasi-independent sets.
\end{abstract}

\maketitle

\section{Introduction}\label{sec:intro}
Let \(G\) be a compact abelian group and let \(\Gamma=\widehat G\) be its discrete dual group. We call a subset \(\Lambda\subseteq \Gamma\setminus\{0\}\) Sidon\footnote{Throughout, we adopt the convention that Sidon sets do not contain \(0\). This avoids degeneracies involving the identity element that can complicate otherwise clean statements.} if there is a constant \(S_\Lambda<\infty\) such that, for every finite set \(F\subseteq\Lambda\) and every choice of scalar coefficients \((a_\gamma)_{\gamma\in F}\),
$$
        \sum_{\gamma\in F}|a_\gamma|
        \leq
        S_\Lambda\left\|\sum_{\gamma\in F}a_\gamma\gamma\right\|_{L^\infty(G)}.
$$
The reverse inequality is immediate from the triangle inequality. Thus the Sidon condition says that a trigonometric polynomial supported on \(\Lambda\) has \(L^\infty\)-norm comparable to the $\ell^1$-norm of its coefficients.

Sidon sets have been extensively studied; we refer the reader to \cite{GH,LR,Pisier2016Subgaussian} for background and will not attempt a survey here. A set \(Q\subseteq\Gamma\setminus\{0\}\) is called \emph{quasi-independent} if the only solution to
$$
        \sum_{\gamma\in Q}\epsilon_\gamma\gamma=0,
        \qquad \epsilon_\gamma\in\{-1,0,1\},
$$
with finitely many nonzero coefficients is the trivial one; equivalently, no nonempty signed sum of distinct elements of \(Q\) vanishes. Quasi-independent sets are the most elementary examples of Sidon sets. This follows from the standard Riesz product argument; see, for example, \cite{GH,LR}. A celebrated theorem of Drury \cite{Drury} states that the finite union of Sidon sets is Sidon. Putting these facts together, every finite union of quasi-independent sets is Sidon. Whether the converse holds---that is, whether every Sidon set is a finite union of quasi-independent sets---is a longstanding open problem. In the early 1980s, Pisier proved the following result \cite{Pisier1981Sidon,Pisier1983Arithmetic,Pisier1983Entropy}.

\begin{theorem}[Pisier's arithmetic characterization]\label{thm:pisier}
Let \(G\) be a compact abelian group, let \(\Gamma=\widehat G\), and let
\(\Lambda\subseteq \Gamma\setminus\{0\}\). Then \(\Lambda\) is Sidon if and only if there is a constant \(\delta>0\) such that every finite subset \(A\subseteq\Lambda\) contains a quasi-independent subset \(Q\subseteq A\) with \(|Q|\geq \delta |A|\).
\end{theorem}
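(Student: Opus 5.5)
Pisier's theorem has two directions, and I would prove them separately.

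\emph{Sufficiency} (proportional quasi-independent subsets imply Sidon). The plan is to go through the subgaussian characterization of Sidon sets. First, for a finite quasi-independent set \(Q\), the Riesz product argument shows that the characters in \(Q\) behave like independent Rademacher (or Steinhaus) variables. Concretely, \(\|\sum_{\gamma\in Q}a_\gamma\gamma\|_{\psi_2}\le C(\sum|a_\gamma|^2)^{1/2}\), which follows from bounding \(\int\exp(\Re\sum \lambda a_\gamma\gamma)\) by a product \(\prod_\gamma \cosh\)-type bound that the Riesz product permits. Next, given finite \(A\subseteq\Lambda\), I would apply the hypothesis iteratively. Extract \(Q_1\subseteq A\) with \(|Q_1|\ge\delta|A|\), then \(Q_2\subseteq A\setminus Q_1\), and so on. The remainders shrink geometrically, so \(A\) is covered by \(O(\delta^{-1}\log|A|)\) quasi-independent pieces. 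This crude covering is not enough by itself, so I would instead use the hypothesis as a combinatorial input to Pisier's entropy criterion. Define \(\psi_2\)-type metric entropy on the unit ball of \(\ell^2(A)\) via the random-character process. The criterion says \(\Lambda\) is Sidon iff \(\|\sum_{\gamma\in A}a_\gamma\gamma\|_{\psi_2}\le C\sqrt{|A|}\) uniformly for unimodular coefficients, equivalently a uniform \(\Lambda(p)\) bound \(\|f\|_p\le C\sqrt p\|f\|_2\).

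To get that bound, I would fix \(A\) with \(|A|=n\) and unimodular \(a_\gamma\). Averaging over random subsets, the hypothesis gives that a random-like selection captures a \(\delta\)-proportion of quasi-independent mass. I would then use the Rudin/Pisier duality: the \(\sqrt p\) bound for all \(p\) on all finite subsets, combined with proportional subgaussian subsets, yields the Sidon constant via Pisier's theorem that \(\Lambda\) is Sidon iff \(\Lambda(p)\) constants are \(O(\sqrt p)\). \textbf{The main obstacle is this step}: passing from ``each \(A\) has a large subgaussian piece'' to ``\(A\) itself is subgaussian.'' I would first try Pisier's original approach. That approach is to show the hypothesis forces the number of \(\{-1,0,1\}\) relations among \(k\) elements of \(A\) to be small, namely \(|\{\epsilon:\sum\epsilon_\gamma\gamma=0\}|\le C^{k}\) restricted suitably. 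This gives an exponential moment estimate \(\int|f|^{2m}\le (Cm)^m\|a\|_2^{2m}\) by counting solutions of \(\gamma_1+\dots+\gamma_m=\gamma_1'+\dots+\gamma_m'\) in \(A\) via iterated extraction.

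\emph{Necessity} (Sidon implies proportional quasi-independent subsets). Let \(A\subseteq\Lambda\) be finite with \(|A|=n\). By Sidon, the characters of \(A\) span an \(\ell^1_n\) inside \(C(G)\) with constant \(S_\Lambda\). Consider the finite set of ``relations'' \(\{\epsilon\in\{-1,0,1\}^A:\sum\epsilon_\gamma\gamma=0\}\). The key estimate is that Sidon sets have few short relations: the number of \(x\in\{-1,0,1\}^A\) with \(|\mathrm{supp}\,x|=k\) and \(\sum x_\gamma\gamma=0\) is at most \((Cn/k)^{k/2}\)-ish, i.e.\ much smaller than \(\binom nk2^k\). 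I would prove this from the subgaussian (\(\Lambda(p)\), \(O(\sqrt p)\)) property of Sidon sets, which follows from Sidon via the Rademacher comparison \(\|\sum a_\gamma\gamma\|_p\le S_\Lambda\|\sum \pm a_\gamma\gamma\|_p\). Moment counting then bounds the relations. Finally, a random subset \(B\subseteq A\) of density \(\eta\) contains few relations in expectation. Deleting one element from each relation supported in \(B\) leaves a quasi-independent \(Q\) with \(|Q|\ge\eta n/2\) once \(\eta\) is small in terms of \(S_\Lambda\).
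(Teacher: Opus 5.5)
The paper does not prove this theorem. It quotes it from Pisier and uses only the direction ``Sidon $\Rightarrow$ proportional quasi-independence.'' Judged on its own, your proposal has a genuine gap in exactly that direction.

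Your relation count is correct. For $f=\sum_{\gamma\in A}\gamma$ and $a+b=k$ one has $\bigl|\int f^a\bar f^{\,b}\bigr|\le\|f\|_k^k$. A relation with $a$ plus-signs and $b$ minus-signs contributes $a!\,b!$ ordered solutions, so there are at most $2^k\|f\|_k^k/k!\le (Cn/k)^{k/2}$ relations of support $k$.

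What fails is the alteration step, which needs $\sum_k\eta^kN_k\le\eta n/2$. With $N_k\le(Cn/k)^{k/2}$, the $k=3$ term is already of order $\eta^3n^{3/2}$. The terms with $k\approx C\eta^2 n$ are of size $e^{c\eta^2 n}$. This is not an artifact of the estimate:
\begin{itemize}
\item In $\bigoplus_{i\ge0}\Z/2\Z$ put $c=e_0$ and $\Lambda=\{c\}\cup\{e_i,\,e_i+c:i\ge1\}$. This is a union of two linearly independent, hence quasi-independent, sets, so it is Sidon with one fixed constant.
\item For $A=\{c\}\cup\{e_i,e_i+c:i\le d\}$, the quadruples $\{e_i,e_i+c,e_j,e_j+c\}$ give $\binom{d}{2}\asymp n^2$ relations of support $4$.
\item A random $B\subseteq A$ of density $\eta$ contains about $\eta^2 d$ complete pairs $\{e_i,e_i+c\}$. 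Every even collection of them is a relation, so $B$ carries roughly $2^{\eta^2 d}$ relations.
\item Hence $|B|$ minus the number of relations is hugely negative, whatever fixed $\eta$ you choose.
\end{itemize}
A quasi-independent subset of size about $n/2$ does exist: drop every $e_i+c$. But it is found by using structure, not by a union bound over relations. So this direction needs a genuinely different selection mechanism; Pisier's own argument goes through his metric-entropy characterization and a combinatorial extraction step.

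In the other direction you stop at what you call the main obstacle, and the repair you propose does not work. First, $\int|f|^{2m}$ counts solutions of $\gamma_1+\dots+\gamma_m=\gamma_1'+\dots+\gamma_m'$ with repetitions, i.e.\ relations with coefficients of size up to $m$, not just $\{-1,0,1\}$-relations. Second, the hypothesis does not make $\{-1,0,1\}$-relations scarce: the example above satisfies it with $\delta=1/2$ and has exponentially many relations.

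The crude covering you dismissed is in fact enough, given the unimodular $\psi_2$ criterion you quote. Choose quasi-independent $Q_j\subseteq R_{j-1}$ with $|Q_j|=\lceil\delta|R_{j-1}|\rceil$; subsets of quasi-independent sets are quasi-independent. Then $|R_j|\le(1-\delta)^j n$, and there are $O(\delta^{-1}\log(en))$ pieces. Since $\|\cdot\|_{\psi_2}$ is a norm and quasi-independent sets are subgaussian with a universal constant, for unimodular $(a_\gamma)$ you get $\|\sum_{\gamma\in A}a_\gamma\gamma\|_{\psi_2}\le C\sum_j|Q_j|^{1/2}\le C\bigl(2\sqrt{n/\delta}+O(\delta^{-1}\log(en))\bigr)\le C_\delta\sqrt n$. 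The middle inequality uses $1-\sqrt{1-\delta}\ge\delta/2$. All the depth of this direction then sits in the quoted criterion, which is itself a theorem of Pisier.
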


This proportional quasi-independence is clearly a necessary condition for a set to be a finite union of quasi-independent sets, so Pisier's theorem provides support for such a decomposition existing.

Here we study abelian groups of bounded torsion, that is, groups for which there is an integer \(N\) with \(N\gamma=0\) for every \(\gamma\in\Gamma\). This case has a long history. Malliavin-Brameret and Malliavin \cite{MM} proved the prime exponent case in 1967, for groups of the form \((\Z/p\Z)^I\). In that setting \(\Gamma\) is a vector space over \(\Z/p\Z\), and the proof uses the Rado--Horn theorem, which characterizes when a finite set of vectors can be decomposed into independent classes \cite{R,H}; we use and discuss the Rado--Horn theorem further below. Varopoulos's 1968 lecture notes state, without giving a complete proof, that the Rado--Horn argument can be extended to groups of squarefree exponent \cite{V}. Bourgain later gave a proof \cite{B83} of the squarefree case using stochastic process methods; one of the results proved in that paper is a projection theorem that we will use here. This projection theorem reduces the general bounded torsion problem to groups of the form \((\Z/p^s\Z)^I\) for a fixed prime power \(p^s\). We explain this reduction in Section~\ref{sec:bourgain-reduction}.

It has long been known that the prime-power case cannot be resolved by the same approach as the prime case. The prime case crucially relies on the vector space structure of \((\Z/p\Z)^I\), which is a vector space over \(\Z/p\Z\). This structure is no longer available in the prime-power case, where \((\Z/p^s\Z)^I\) for \(s\geq 2\) is a module over \(\Z/p^s\Z\) rather than a vector space over a field, and direct analogs of the combinatorial arguments fail. The general bounded torsion problem was stated as an open problem in Varopoulos's 1968 lectures \cite{V}, Lindahl and Poulsen's 1971 book \cite{LP}, and L\'opez and Ross's 1975 book \cite{LR}. The special case \( (\Z/4\Z)^I\) was raised as an open problem in Bourgain's 1985 paper \cite{Bourgain1985ArithmeticDiameter} and his 2001 survey \cite{Bourgain2001LambdaP}. Our main result answers this question.

\begin{theorem}\label{thm:main}
Let \(G\) be a compact abelian group whose dual group \(\Gamma=\widehat G\) has bounded torsion. Then a set \(\Lambda\subseteq\Gamma\setminus\{0\}\) is Sidon if and only if \(\Lambda\) is a finite union of quasi-independent sets.
\end{theorem}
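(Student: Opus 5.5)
The easy direction is already in hand: quasi-independent sets are Sidon by the Riesz product argument, and by Drury's theorem a finite union of Sidon sets is Sidon. So everything rests on the converse. Let \(\Lambda\) be Sidon in a group \(\Gamma\) of exponent \(N\). We must split \(\Lambda\) into boundedly many quasi-independent pieces.

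The first step is to reduce to a single prime power. Write \(\Gamma=\bigoplus_{p\mid N}\Gamma_p\) as the sum of its \(p\)-primary parts. Then refine each \(\Gamma_p\) by the order filtration, sorting elements according to the orders of their components. Bourgain's projection theorem from \cite{B83} (the reduction of Section~\ref{sec:bourgain-reduction}) does the following. After splitting \(\Lambda\) into finitely many pieces, it lets us replace each piece by its image under a projection onto a group of the form \((\Z/p^s\Z)^I\). The image is still Sidon, and quasi-independence of the image pulls back to quasi-independence of the piece. So it suffices to treat \(\Gamma=(\Z/p^s\Z)^I\), and we may assume every element of \(\Lambda\) has order exactly \(p^s\).

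The second step sets up a matroid-type decomposition criterion. Over a field, the Rado--Horn theorem says a finite set \(A\) splits into \(k\) independent sets iff \(|B|\le k\,\mathrm{rank}(B)\) for every \(B\subseteq A\). Quasi-independence is not a matroid property, so I would instead use the family of \emph{independent} sets in the module sense: \(\sum n_\gamma\gamma=0\) forces every \(n_\gamma\gamma=0\). This family is a matroid on elements of exact order \(p^s\), since such a set is independent iff its reduction \(p^{s-1}\gamma\) is linearly independent in the \(\F_p\)-vector space \(p^{s-1}\Gamma\). Such independent sets are quasi-independent when \(p\ge3\); for \(p=2\) one also needs to handle relations with coefficients \(\pm1\) when \(2\gamma\neq0\), which I would treat by the same argument. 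This is too strong to be the right notion, though, because a Sidon set need not have independent reductions. Instead I would run Rado--Horn on a layered rank function. For \(B\subseteq\Lambda\), let
\[
r(B)=\max_j \dim_{\F_p}\bigl(p^{j}\langle B\rangle/p^{j+1}\langle B\rangle\bigr)\quad\text{or}\quad r(B)=\log_p|\langle B\rangle|/s.
\]
This is submodular, since \(\log|\langle B\rangle|\) is submodular for subgroups. Rado's generalized theorem for submodular rank functions (the Edmonds covering theorem for polymatroids) then decomposes \(A\) into \(k\) sets \(A_i\) with \(|C|\le r(C)\)-type control for all \(C\subseteq A_i\), provided \(|B|\le k\,r(B)\) for all \(B\subseteq A\).

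The third step supplies the analytic input: Sidon gives \(|B|\le C\log|\langle B\rangle|\). This follows from Pisier's characterization, Theorem~\ref{thm:pisier}, or directly from the standard entropy bound. A Sidon set \(B\) spans \(2^{c|B|}\) distinct \(\pm\)-sums inside \(\langle B\rangle\), so \(|\langle B\rangle|\ge 2^{c|B|}\). This verifies the Rado hypothesis uniformly, with \(k\) depending only on \(S_\Lambda,p,s\), and a compactness argument over finite subsets extends the decomposition to infinite \(\Lambda\).

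The main obstacle is the last step: showing that a set \(A_i\) with full "entropy" (every \(C\subseteq A_i\) satisfies \(|\langle C\rangle|\ge p^{c'|C|}\)) further splits into boundedly many quasi-independent sets. I would attack it by induction on \(s\). First pass to \(p^{s-1}\)-multiples or reductions mod \(p\Gamma\), and apply the prime case (Malliavin) to the \(\F_p\)-layer \(\Gamma/p\Gamma\). This yields pieces whose reductions are linearly independent, and these are genuinely independent and hence quasi-independent, modulo the \(p=2\) caveat. The leftover elements then lie in deeper layers and are handled inductively. The danger is that the entropy condition does not pass cleanly to the layers, so I expect to need a pigeonhole/greedy loss that costs only a constant factor at each of the \(s\) levels.
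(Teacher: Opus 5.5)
Your outer framework matches the paper: Riesz products plus Drury for the easy direction, Bourgain's projection theorem to reach exponent $p^s$, the growth bound $|\langle B\rangle|\ge 2^{\delta|B|}$ from Pisier's theorem, and compactness. The prime-power decomposition itself, however, is missing. The claim you isolate in your last step is true: growth $|\langle C\rangle|\ge 2^{\delta|C|}$ for every subset does imply a bounded split into quasi-independent sets, and the paper's proof of Theorem~\ref{thm:reduced} uses only this growth bound. But you leave this claim as an obstacle, and the route you sketch for it fails.

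Passing to $\Gamma/p\Gamma$ and invoking the prime case requires the \emph{reductions of the elements} to satisfy a Rado--Horn bound, and quasi-independence gives no control over them. In $(\Z/4\Z)^n$, take $B=\{e_1\}\cup\{e_1+2e_j:2\le j\le n\}$. It is quasi-independent: coordinate $j\ge 2$ of a signed relation is $2\epsilon_j$, which forces $\epsilon_j=0$, and then $\epsilon_1=0$. Yet every element of $B$ reduces to $e_1$. So any piece with independent reductions has at most one element. The leftover elements are not in a deeper layer $2\Gamma$; they still have order $4$ and nonzero reduction, so an induction on $s$ has nothing to act on.

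Your Step 2 does not repair this.
\begin{itemize}
\item With the integer-valued polymatroid $\log_p|\langle B\rangle|$, the covering theorem only returns classes with $|\langle C\rangle|\ge p^{|C|}$. This is no stronger than what Pisier already gives, and it does not imply quasi-independence: take $\{a,b,a+b\}$ with $a,b$ independent of order $p^s$ and $s\ge 2$.
\item With $\log_q|\langle B\rangle|$, the function is not integer-valued, and the claimed covering would produce module-independent classes. The example above refutes this: it satisfies $|C|\le 2\log_4|\langle C\rangle|$ for all $C$, yet cannot be split into two such classes once $n\ge 3$.
\item Your other candidate, $\dim \langle B\rangle/p\langle B\rangle$, is not submodular. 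In $(\Z/4\Z)^2$ take $a=2e_1+2e_2$, $b=e_1+e_2$, $c=e_1-e_2$. Then $\{a,b\}$, $\{a,c\}$ and $\{a\}$ each generate a cyclic group, while $\{a,b,c\}$ needs two generators, so $2+1>1+1$.
\end{itemize}

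The missing idea is to reduce the \emph{relations}, not the elements, modulo $p$. A signed zero-relation $\epsilon\in\{-1,0,1\}^A$ keeps its support in $\F_p^A$ because $\pm1\not\equiv 0$. Suppose $d$ relations supported in $T$ have $\F_p$-independent reductions. By Lemma~\ref{lem:redModP} they generate a copy of $\Z_q^d$ inside the kernel of $c\mapsto\sum_{a\in T}c_a a$. That kernel has size $q^{|T|}/|\langle T\rangle|\le q^{|T|}2^{-\delta|T|}$. Hence the reduced relations supported in $T$ span at most $(1-\delta/(s\log_2 p))|T|$ dimensions.

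A dual form of Rado--Horn then finishes the job: apply Rado--Horn to the columns of a kernel matrix, together with an induction over a minimal subspace. This splits $A$ into $\lceil s\log_2 p/\delta\rceil$ classes, none containing the support of a reduced relation, so each class is quasi-independent. All the linear algebra takes place in the vector space $\F_p^A$ of coefficient vectors, which a layer-by-layer treatment of the module $\Gamma$ cannot achieve.
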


As discussed above, the implication that finite unions of quasi-independent sets are Sidon sets follows from the classical theory and Drury's theorem. Our contribution is the converse implication in prime-power groups, from which Theorem~\ref{thm:main} follows via Bourgain's reduction.

\begin{theorem}\label{thm:reduced}
Let \(G=(\Z/q\Z)^n\) for a prime power \(q=p^s\). Suppose \(A\subset G\) has the property that for every \(B\subseteq A\) there exists a quasi-independent subset \(Q\subseteq B\) with \(|Q|\geq \delta |B|\), for some \(\delta >0 \). Then \(A\) can be decomposed as
$$
        A=\bigsqcup_{i=1}^k A_i,
        \qquad
        k\leq \left\lceil \frac{s\log_2(p)}{\delta}\right\rceil,
$$
where each \(A_i\) is quasi-independent.
\end{theorem}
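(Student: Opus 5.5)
The plan is to imitate the Rado--Horn argument of Malliavin-Brameret and Malliavin. The matroid rank is replaced by the minimal number of generators \(d(H)\) of a subgroup \(H\le G\), and independence by quasi-independence. The numerology already matches the claimed bound. If \(Q\) is quasi-independent, then the \(0/1\) subset sums \(\sum_{\gamma\in S}\gamma\), \(S\subseteq Q\), are pairwise distinct, since a coincidence gives a nontrivial vanishing signed sum. Hence \(2^{|Q|}\le |\langle Q\rangle|\). Moreover, any subgroup \(H\) of \((\Z/q\Z)^n\) satisfies \(|H|\le q^{d(H)}\). Combining these,
$$
|Q|\le \log_2|\langle Q\rangle| \le s\log_2(p)\, d(\langle Q\rangle).
$$
So for any \(B\subseteq A\), the hypothesis gives \(\delta|B|\le s\log_2(p)\, d(\langle B\rangle)\). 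This inequality is the replacement for the Rado--Horn condition \(|B|\le k\, r(B)\).

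Set \(k=\lceil s\log_2(p)/\delta\rceil\) and build the partition one element at a time. Suppose \(A'\subset A\) is already partitioned into quasi-independent classes \(Q_1,\dots,Q_k\), and let \(a\in A\setminus A'\). I would run the usual augmenting-path search. Say \(x\) is \emph{blocked} by \(Q_i\) if \(Q_i\cup\{x\}\) is not quasi-independent. If \(x\) is blocked, then some \(y\in Q_i\) occurs in a vanishing signed sum together with \(x\), and one can try swapping \(x\) for \(y\). Starting from \(a\), let \(B\) be the set of all elements reachable by such exchange chains. Either some chain terminates at an element that fits into a class with no blocking, and then shifting along a shortest chain inserts \(a\). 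Or every element of \(B\) is blocked by every class.

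In the second case I want to show two things:
\begin{enumerate}
\item every element of \(B\) lies in \(\langle Q_i\cap B\rangle\) for each \(i\);
\item the sets \(Q_i\cap B\) cover \(B\setminus\{a\}\).
\end{enumerate}
Together these give \(\langle B\rangle=\langle Q_i\cap B\rangle\), hence \(d(\langle B\rangle)\le |Q_i\cap B|\) for all \(i\). Summing over \(i\) gives \(|B|\ge 1+\sum_i|Q_i\cap B|\ge 1+k\,d(\langle B\rangle)\). Plugging this into the displayed inequality yields
$$
\delta\bigl(1+k\,d(\langle B\rangle)\bigr)\le s\log_2(p)\,d(\langle B\rangle),
$$
which forces \(k<s\log_2(p)/\delta\) and contradicts the choice of \(k\). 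Hence the augmentation always succeeds, and induction on \(|A|\) gives the decomposition. The case \(B\subseteq A\) infinite reduces to the finite case by a standard compactness argument, since the class count is uniform.

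The main obstacle is the exchange step itself. Quasi-independence is not a matroid: for example, \(\{x,3x\}\) in \(\Z/4\Z\) generates a group of length \(2\) yet has \(x+3x=0\). So two facts are unclear in general:
\begin{itemize}
\item that performing a swap along a shortest chain preserves quasi-independence of the other classes;
\item that blocking implies membership in the generated subgroup.
\end{itemize}
The second is fine, since a blocking relation expresses \(x\) as a signed sum of elements of \(Q_i\cap B\). The first is the real issue. I would attack it by choosing chains minimal with respect to a potential, such as \(\sum_i \log_2|\langle Q_i\rangle|\), or lexicographically along the filtration \(p^{s-1}G\subset\dots\subset G\). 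The goal is to show that a shortest augmenting chain cannot create a new vanishing signed sum. If that fails, the fallback is to weaken the target. One would require only the conclusion \(\langle Q_i\cap B\rangle\supseteq B\), which is all the counting uses, and allow the exchange graph to be defined through \(\langle\cdot\rangle\)-closure rather than through signed sums.
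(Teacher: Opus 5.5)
\textbf{There is a genuine gap: the augmentation step is not supplied, and it fails as you have set it up.} Your counting is sound. The hypothesis gives $\delta|B|\le\log_2|\langle B\rangle|\le s\log_2(p)\,d(\langle B\rangle)$, and a stuck set $B$ satisfying (1) and (2) would contradict the choice of $k$. But the whole argument rests on the step you leave open.

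The augmenting-path proof of Rado--Horn depends on matroid structure. If $Q_i\cup\{x\}$ is dependent, it contains a unique circuit. Every element $y$ of that circuit makes $Q_i-y+x$ independent, and a matroid exchange lemma then guarantees that shifting along a shortest chain keeps every class independent. Quasi-independence has none of this. In $\Z/4\Z$, the set $Q=\{1,2\}$ is quasi-independent, and $x=3$ is blocked by it through both $3+1=0$ and $3-1+2=0$. The element $2$ occurs in a vanishing signed sum with $x$, yet $\{1,3\}$ is not quasi-independent.

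This creates a tension you cannot escape by choosing the exchange graph differently.
\begin{enumerate}
\item Claim (1) needs an edge $x\to y$ whenever $y$ occurs in \emph{some} blocking relation of $x$. With those edges, a shift along a chain can produce a class that is not quasi-independent.
\item If you keep only edges that are valid swaps, the reachable set need not contain the elements of the blocking relations, and (1) fails.
\end{enumerate}
Your proposed fixes do not resolve this. A potential such as $\sum_i\log_2|\langle Q_i\rangle|$ does not address it, and neither does closure under $\langle\cdot\rangle$, which is not a matroid closure either: in $\Z/4\Z$ we have $2\in\langle 1\rangle\setminus\langle\emptyset\rangle$ but $1\notin\langle 2\rangle$. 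As the paper remarks, this is exactly where direct analogues of the prime-exponent combinatorics break down.

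\textbf{The missing idea} is to work in an honest $\F_p$-linear matroid on the side of the \emph{relations}, not the elements. The paper proceeds as follows.
\begin{enumerate}
\item Reduce the signed zero-relations modulo $p$, obtaining a family $\mathcal C\subseteq\F_p^A$.
\item Use your growth input in the form $|\ker\Phi_T|=q^{|T|}/|\langle T\rangle|\le q^{|T|}/2^{\delta|T|}$.
\item Combine it with Lemma~\ref{lem:redModP}: relations whose reductions are $\F_p$-independent generate a subgroup of order $q^{d}$.
\item Conclude that $\dim\Span\{c\in\mathcal C:\operatorname{supp}c\subseteq T\}\le\frac{k-1}{k}|T|$ for every $T$.
\item Apply a dual form of Rado--Horn (Lemma~\ref{lem:complement-radohorn}, applied to the columns of a kernel matrix in the proof of Theorem~\ref{thm:supportpartition}). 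This gives a $k$-partition in which no vector of $\mathcal C$ is supported in a single class.
\item Since the coefficients $\pm1$ stay nonzero modulo $p$, every class is quasi-independent.
\end{enumerate}
No exchange argument on quasi-independent sets themselves is ever needed.
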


In applications to an infinite Sidon set, Pisier's arithmetic characterization supplies a constant \(\delta>0\) such that every finite subset satisfies the hypothesis of Theorem~\ref{thm:reduced}. Thus every finite subset of a Sidon set satisfies the proportional quasi-independence hypothesis.

The analogous problem with \(\Gamma=\Z\) is perhaps the most well-known open problem in the subject and is not addressed by our arguments.

\subsection{Overview of proof}
By Bourgain's projection theorem, it is enough to prove a uniform finite decomposition result in groups of prime-power exponent. Thus fix a prime power \(q=p^s\), let \(A\subset(\Z/q\Z)^n\) be finite, and write \(A=\{g_i:i\in[m]\}\). A subset of \(A\) is not quasi-independent exactly when it supports a nonzero signed relation
$$
        \sum_{i\in[m]}\epsilon_i g_i=0,
        \qquad
        \epsilon_i\in\{-1,0,1\}.
$$
We view such a relation as a vector \(\epsilon=(\epsilon_i)_{i\in[m]}\in(\Z/q\Z)^m\), with coordinates indexed by the elements of \(A\), and let \(\mathcal E\) be the collection of all signed zero-relations. Thus the goal is to partition \(A\) so that no vector in \(\mathcal E\) is supported inside a single class.

The difficulty is that \((\Z/q\Z)^m\) is not a vector space. We therefore reduce the vectors in \(\mathcal E\) modulo \(p\), obtaining vectors in \((\Z/p\Z)^m\). The main decomposition lemma, based on the Rado--Horn theorem, says roughly that it is enough to prove the following local dimension bound: for every \(T\subset[m]\), the reduced zero-relations supported inside \(T\) span a subspace of dimension at most a fixed proportion, bounded away from \(1\), of \(|T|\). Under this condition, one can partition \(A\) so that no reduced zero-relation is supported inside a single class; since nonzero signed coefficients \(\pm1\) remain nonzero modulo \(p\), the same partition also rules out signed zero-relations in \((\Z/q\Z)^m\).

It remains to prove this local dimension bound. The proportional quasi-independence hypothesis gives subgroup growth: if \(T\subset[m]\), then \(\{g_i:i\in T\}\) contains a large quasi-independent subset, whose subset sums are distinct, and hence \(\langle g_i:i\in T\rangle\) is large. This controls the number of coefficient relations supported on \(T\). Indeed, define
$$
        \Phi_T:(\Z/q\Z)^T\to \langle g_i:i\in T\rangle,
        \qquad
        (c_i)_{i\in T}\mapsto \sum_{i\in T} c_i g_i.
$$
Then \(\Phi_T\) is onto, and
$$
        |\ker \Phi_T|\,|\langle g_i:i\in T\rangle|=q^{|T|}.
$$
Thus subgroup growth gives an upper bound for \(|\ker\Phi_T|\), hence for the number of coefficient relations supported on \(T\). After reducing modulo \(p\), this gives the required dimension bound and completes the prime-power argument.

\subsection*{Use of AI-assisted tools.}
The author used AI-assisted tools, including ChatGPT, Gemini, and Claude, during the research underlying this paper, primarily as a sounding board, to compute and analyze large collections of examples, and to investigate proposed intermediate claims. These tools were also used for LaTeX assistance, formatting, and copyediting during the preparation of the manuscript. The mathematical arguments presented here are the author's own, and the author takes full responsibility for the contents of the paper.

\section{Notation}
For \(k\in\N\), write \([k]=\{1,\ldots,k\}\). For an integer \(q\geq 2\), write \(\Z_q=\Z/q\Z\). If \(A\) is a subset of an abelian group, \(\langle A\rangle\) denotes the subgroup generated by \(A\). If \(q=p^s\) and \(x\in\Z_q^T\), write \(\underline{x}\in\F_p^T\) for its reduction modulo \(p\). We write \(\operatorname{supp}(x)\) for the set of coordinates on which \(x\) is nonzero, and \(\Span\) always denotes linear span over the field in question.

\section{Linear relations and growth}
Let \(q=p^s\) be a prime power and let \(G=\Z_q^n\). A subset \(A\subset G\) is quasi-independent if the only choice of coefficients \(\epsilon_a\in\{-1,0,1\}\) satisfying \(\sum_{a\in A}\epsilon_a a=0\) is the trivial choice \(\epsilon_a=0\) for every \(a\in A\).

In this section we reduce Theorem~\ref{thm:reduced} to a linear algebra statement, which will be proved in the following sections. For \(A\subset G\), recall that \(\langle A\rangle=\{\sum_{a\in A}c_a a:c_a\in\Z_q\}\). Clearly, if \(A\) is quasi-independent, then its \(2^{|A|}\) subset sums are all distinct, and hence \(|\langle A\rangle|\geq 2^{|A|}\). More generally, if \(A\) contains a quasi-independent subset of size at least \(\delta |A|\), then \(|\langle A\rangle|\geq 2^{\delta |A|}\).

\begin{lemma}\label{lem:kernel-size}
Let \(q=p^s\) be a prime power and let \(G=\Z_q^n\). Let \(A\subset G\) have the property that for every \(B\subseteq A\), there exists a quasi-independent subset \(Q\subseteq B\) with \(|Q|\geq \delta |B|\). Then the number of coefficient vectors \(c=(c_a)_{a\in A}\in\Z_q^A\) satisfying \(\sum_{a\in A}c_a a=0\) is at most \(q^{|A|}/2^{\delta |A|}\).
\end{lemma}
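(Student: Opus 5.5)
The plan is to count the kernel of the evaluation homomorphism described in the overview. Let
$$
\Phi:\Z_q^A\to G,\qquad (c_a)_{a\in A}\mapsto \sum_{a\in A}c_a a .
$$
The set of coefficient vectors we must count is exactly \(\ker\Phi\). Since \(\Phi\) is a group homomorphism from the finite group \(\Z_q^A\) onto \(\langle A\rangle\), the first isomorphism theorem gives
$$
|\ker\Phi|\,|\langle A\rangle| = |\Z_q^A| = q^{|A|}.
$$
So it suffices to show \(|\langle A\rangle|\geq 2^{\delta|A|}\).

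For this lower bound, apply the hypothesis with \(B=A\) to obtain a quasi-independent \(Q\subseteq A\) with \(|Q|\geq\delta|A|\). Next check that the \(2^{|Q|}\) subset sums \(\sum_{a\in S}a\), for \(S\subseteq Q\), are distinct. Suppose two subsets \(S\neq S'\) had equal sums. Then
$$
\sum_{a\in S\setminus S'}a-\sum_{a\in S'\setminus S}a=0
$$
is a nontrivial signed relation with coefficients in \(\{-1,0,1\}\), which contradicts quasi-independence. All of these sums lie in \(\langle A\rangle\), so
$$
|\langle A\rangle|\geq 2^{|Q|}\geq 2^{\delta|A|}.
$$
Combining this with the identity above gives \(|\ker\Phi|\leq q^{|A|}/2^{\delta|A|}\), as claimed.

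There is no real obstacle here. The only points needing care are that \(\Phi\) is a homomorphism onto \(\langle A\rangle\), which holds by the definition of \(\langle A\rangle\), and that the distinct-subset-sum argument uses only signs \(\pm1\), so it needs no invertibility in \(\Z_q\). The hypothesis for general \(B\subseteq A\) is not needed for this statement; only \(B=A\) is used. The stronger hypothesis presumably matters when the lemma is later applied to each subset \(T\) in the local dimension bound.
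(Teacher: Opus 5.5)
Your proof is correct and matches the paper's argument: both use the surjection \(\Phi:\Z_q^A\to\langle A\rangle\) to get \(|\ker\Phi|\,|\langle A\rangle|=q^{|A|}\), then bound \(|\langle A\rangle|\geq 2^{\delta|A|}\) by the distinct subset sums of the quasi-independent \(Q\subseteq A\) obtained from the hypothesis with \(B=A\). The only difference is that you write out the distinct-subset-sums argument, which the paper just states as clear immediately before the lemma.
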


\begin{proof}
Consider the homomorphism \(\Phi:\Z_q^A\to \langle A\rangle\) given by \(\Phi((c_a)_{a\in A})=\sum_{a\in A}c_a a\). This map is onto by the definition of \(\langle A\rangle\). Therefore every element of \(\langle A\rangle\) has the same number of preimages, namely \(|\ker\Phi|\), and so
$$
        q^{|A|}=|\Z_q^A|=|\ker\Phi|\,|\langle A\rangle|.
$$
By the proportional quasi-independence hypothesis, applied to \(B=A\), the set \(A\) contains a quasi-independent subset of size at least \(\delta |A|\). Hence \(|\langle A\rangle|\geq 2^{\delta |A|}\). It follows that \(|\ker\Phi|=q^{|A|}/|\langle A\rangle|\leq q^{|A|}/2^{\delta |A|}\). Since \(\ker\Phi\) is exactly the set of coefficient vectors \(c\in\Z_q^A\) satisfying \(\sum_{a\in A}c_a a=0\), this proves the lemma.
\end{proof}

We now need the following lemma, which we will use to relate relations mod~\(p\) and mod~\(p^s\). For \(x\in \Z_q^T\), write \(\underline{x}\in \F_p^T\) for its reduction modulo \(p\).

\begin{lemma}\label{lem:redModP}
Let \(q=p^s\), and let \(u_1,\ldots,u_d\in \Z_q^T\). Suppose that their reductions \(\underline{u_1},\ldots,\underline{u_d}\in \F_p^T\) are linearly independent over \(\F_p\). Then \(u_1,\ldots,u_d\) generate a subgroup of \(\Z_q^T\) of size \(q^d=p^{sd}\).
\end{lemma}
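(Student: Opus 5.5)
We will show that the homomorphism
$$
        \Psi:\Z_q^d\to \Z_q^T,\qquad (c_1,\ldots,c_d)\mapsto \sum_{j=1}^d c_j u_j,
$$
is injective. Its image is exactly the subgroup generated by \(u_1,\ldots,u_d\), so injectivity gives that this subgroup has size \(|\Z_q^d|=q^d=p^{sd}\). Thus the whole statement reduces to showing \(\ker\Psi=0\).

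Suppose, for contradiction, that \(c=(c_1,\ldots,c_d)\neq 0\) and \(\sum_j c_j u_j=0\) in \(\Z_q^T\). Lift the \(c_j\) to integers in \([0,q)\), not all zero. Let \(p^r\) be the largest power of \(p\) dividing every \(c_j\); then \(r<s\), since some \(c_j\) is nonzero modulo \(q\). Write \(c_j=p^r c_j'\) with integers \(c_j'\), where some \(c_j'\) is not divisible by \(p\). The relation says that every coordinate of the integer vector \(p^r\sum_j c_j' \tilde u_j\) is divisible by \(p^s\), where \(\tilde u_j\) are integer lifts of the \(u_j\). Dividing by \(p^r\), every coordinate of \(\sum_j c_j'\tilde u_j\) is divisible by \(p^{s-r}\), and \(s-r\geq 1\). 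In particular, each coordinate is divisible by \(p\).

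Reducing modulo \(p\) gives \(\sum_j \underline{c_j'}\,\underline{u_j}=0\) in \(\F_p^T\), and not all \(\underline{c_j'}\) are zero. This contradicts the linear independence of \(\underline{u_1},\ldots,\underline{u_d}\). Hence \(\ker\Psi=0\), which proves the claim.

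The only delicate step is extracting the maximal common \(p\)-power from the coefficients. This normalization is what ensures the reduced relation mod \(p\) is nontrivial, and it uses \(r<s\) so that at least one factor of \(p\) survives in the divided relation. Alternatively, one can argue by induction: if \(p\) divides all \(c_j\), then \(c/p\) gives a relation in \(\Z_{p^{s-1}}^T\), and the case \(s=1\) is exactly linear independence.
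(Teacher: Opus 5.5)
Your proof is correct and follows essentially the paper's route: both show that $(c_1,\ldots,c_d)\mapsto\sum_j c_j u_j$ is injective by using the independence of the reductions modulo $p$ to force $p$-divisibility of the coefficients. The only difference is bookkeeping. The paper divides out one factor of $p$ at a time and repeats $s$ times, while you extract the maximal common power $p^r$ (with $r<s$) in one step to reach a contradiction.
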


\begin{proof}
This is equivalent to showing that the map \(\Z_q^d\to\Z_q^T\) given by \((a_1,\ldots,a_d)\mapsto \sum_{j=1}^d a_j u_j\) is injective. Suppose \(\sum_{j=1}^d a_j u_j=0\) in \(\Z_q^T\). Reducing modulo \(p\), we get \(\sum_{j=1}^d \underline{a_j}\,\underline{u_j}=0\) in \(\F_p^T\). Since \(\underline{u_1},\ldots,\underline{u_d}\) are independent over \(\F_p\), each \(\underline{a_j}=0\). Thus each \(a_j\) is divisible by \(p\), say \(a_j=pb_j\).

Then \(p\sum_{j=1}^d b_j u_j=0\) in \((\Z/p^s\Z)^T\). Equivalently, the image of \(\sum_{j=1}^d b_j u_j\) in \((\Z/p^{s-1}\Z)^T\) is zero. Reducing this congruence modulo \(p\), the same independence argument shows that each \(b_j\) is divisible by \(p\). Hence each \(a_j\) is divisible by \(p^2\). Repeating this argument \(s\) times shows that each \(a_j\) is divisible by \(p^s\), so \(a_j=0\) in \(\Z_q\). Thus the map is injective, and its image has size \(|\Z_q^d|=p^{sd}\).
\end{proof}

For \(T\subseteq A\), let \(\mathcal E(T)\) denote the set of nonzero signed zero-relations supported on \(T\), viewed as vectors in \(\Z_q^T\). Thus \(\epsilon\in\mathcal E(T)\) means \(\epsilon=(\epsilon_a)_{a\in T}\), each \(\epsilon_a\in\{-1,0,1\}\), not all \(\epsilon_a\) are zero, and \(\sum_{a\in T}\epsilon_a a=0\). Let \(\underline{\mathcal E(T)}\subset\F_p^T\) denote the set of reductions of these vectors modulo \(p\).

\begin{corollary}\label{cor:local-dim-bound}
Let \(q=p^s\), and let \(A\subset \Z_q^n\) satisfy the proportional quasi-independence hypothesis of Theorem~\ref{thm:reduced}. For every \(T\subseteq A\), set \(d(T):=\dim_{\F_p}\Span \underline{\mathcal E(T)}\). Then
$$
        d(T)\leq \left(1-\frac{\delta}{s\log_2 p}\right)|T|.
$$
In particular, if \(M=\left\lceil s\log_2(p)/\delta\right\rceil\), then \(d(T)\leq \frac{M-1}{M}|T|\).
\end{corollary}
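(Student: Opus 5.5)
The plan is to combine Lemma~\ref{lem:kernel-size} with Lemma~\ref{lem:redModP}, applied to the set \(T\) instead of \(A\). Fix \(T\subseteq A\) and set \(d=d(T)\).

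\emph{Step 1: a large subgroup inside the kernel.} Choose \(\epsilon^{(1)},\ldots,\epsilon^{(d)}\in\mathcal E(T)\) whose reductions \(\underline{\epsilon^{(1)}},\ldots,\underline{\epsilon^{(d)}}\) form a basis of \(\Span\underline{\mathcal E(T)}\). This is possible because a spanning set of a finite-dimensional space contains a basis. By Lemma~\ref{lem:redModP}, these vectors generate a subgroup \(H\le\Z_q^T\) of size \(q^d=p^{sd}\). Each \(\epsilon^{(j)}\) lies in the kernel of \(\Phi_T:c\mapsto\sum_{a\in T}c_a a\), and this kernel is a subgroup. Hence \(H\subseteq\ker\Phi_T\), so \(|\ker\Phi_T|\ge p^{sd}\).

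\emph{Step 2: an upper bound on the kernel.} The hypothesis of Theorem~\ref{thm:reduced} is hereditary: every \(B\subseteq T\) is also a subset of \(A\), so it contains a quasi-independent subset of proportion \(\delta\). Lemma~\ref{lem:kernel-size} therefore applies with \(T\) in place of \(A\), giving \(|\ker\Phi_T|\le q^{|T|}2^{-\delta|T|}\).

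\emph{Step 3: compare.} Combining the two bounds gives \(p^{sd}\le p^{s|T|}2^{-\delta|T|}\). Taking \(\log_2\) yields \(sd\log_2p\le s|T|\log_2 p-\delta|T|\), that is,
\[
d\le\Bigl(1-\frac{\delta}{s\log_2p}\Bigr)|T|.
\]

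\emph{Step 4: the integer version.} Set \(M=\lceil s\log_2(p)/\delta\rceil\). Then \(M\ge s\log_2(p)/\delta\), so \(\delta/(s\log_2 p)\ge 1/M\), and hence \(1-\delta/(s\log_2p)\le 1-1/M=(M-1)/M\). Since \(\delta\le 1\) and \(s\log_2p\ge1\), we have \(M\ge1\), so this is well defined.

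There is no real obstacle. The only points needing care are that the basis can be chosen from genuine lifts lying in \(\mathcal E(T)\), which guarantees \(H\subseteq\ker\Phi_T\), and that the hypothesis restricts to subsets, which is what allows Lemma~\ref{lem:kernel-size} to be used for \(T\).
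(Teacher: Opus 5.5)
Your proposal is correct and follows the paper's proof essentially step for step. You lift a basis of $\Span\underline{\mathcal E(T)}$ to genuine relations in $\mathcal E(T)$, use Lemma~\ref{lem:redModP} to obtain a subgroup of order $q^d$ inside $\ker\Phi_T$, bound that kernel by Lemma~\ref{lem:kernel-size} applied to $T$, and take logarithms, with the integer version following from $1/M\le \delta/(s\log_2 p)$ exactly as in the paper.
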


\begin{proof}
Fix \(T\subseteq A\), and write \(d=d(T)\). Choose \(\epsilon_1,\ldots,\epsilon_d\in\mathcal E(T)\) such that \(\underline{\epsilon_1},\ldots,\underline{\epsilon_d}\) are linearly independent over \(\F_p\). By Lemma~\ref{lem:redModP}, these \(\epsilon_j\)'s generate a subgroup of \(\Z_q^T\) of size \(q^d\).

Each \(\epsilon_j\) is a zero-relation among the elements of \(T\), so every element of this subgroup is also a coefficient relation among the elements of \(T\). Therefore this subgroup lies inside the kernel of the coefficient map \(\Z_q^T\to \langle T\rangle\). By Lemma~\ref{lem:kernel-size}, applied to \(T\), this kernel has size at most \(q^{|T|}/2^{\delta |T|}\). Hence
$$
        q^d\leq \frac{q^{|T|}}{2^{\delta |T|}}.
$$
Since \(q=p^s\), taking logarithms gives \(sd\log_2 p\leq s|T|\log_2 p-\delta |T|\), and therefore
$$
        d\leq \left(1-\frac{\delta}{s\log_2 p}\right)|T|.
$$
Finally, if \(M=\lceil s\log_2(p)/\delta\rceil\), then \(1/M\leq \delta/(s\log_2 p)\), so \(d(T)\leq (M-1)|T|/M\).
\end{proof}

\section{The Rado--Horn theorem and a support-partition consequence}

We begin by recalling a classical theorem proved by Rado~\cite{R} in 1942
and independently rediscovered by Horn~\cite{H} in 1955.
This result is the key tool used by Malliavin-Brameret and Malliavin~\cite{MM}
in their proof of the prime exponent case; we will use it in a less direct way.

\begin{theorem}[Rado--Horn\footnote{The original statements of Rado~\cite{R} and Horn~\cite{H} are for sets of vectors rather than indexed lists (which allow vectors to be repeated). This slightly more general form can be found, for instance, in the work of Casazza and Peterson~\cite{CasazzaPeterson2012}.}]
Let $I$ be a finite index set, let $(v_i)_{i \in I}$ be an indexed list of vectors in a vector
space $V$ over a field $\F$, and let $k \geq 1$. Suppose that
$$
|\{i \in I : v_i \in L\}| \leq k \dim L
$$
for every subspace $L \leq V$. Then $I$ admits a partition
$I = I_1 \sqcup \cdots \sqcup I_k$ such that $\{v_i : i \in I_j\}$ is
linearly independent for each $j$.
\end{theorem}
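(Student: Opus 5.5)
We prove the Rado--Horn theorem by the classical augmenting-path (matroid partition) argument. Call a \emph{partial partition} a family of pairwise disjoint sets \(I_1,\dots,I_k\subseteq I\) such that each \(\{v_i:i\in I_j\}\) is linearly independent with no repeated vector. Fix one maximizing \(|I_1\cup\cdots\cup I_k|\). If this union is all of \(I\), we are done. So suppose some index \(x\) is uncovered. We will derive a subspace \(L\) with \(|\{i:v_i\in L\}|>k\dim L\), contradicting the hypothesis.

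First note the degenerate cases. Taking \(L=\{0\}\) in the hypothesis shows no \(v_i\) is zero. Taking \(L\) a line shows each vector occurs at most \(k\) times in the list. So independence of an indexed set just means the underlying vectors are distinct and independent. Next build the exchange structure. For \(y\in I\) and a class \(j\) with \(y\notin I_j\), either \(\{v_i:i\in I_j\}\cup\{v_y\}\) is independent, or \(v_y\) lies in \(\Span\{v_i:i\in I_j\}\). In the latter case there is a unique minimal circuit \(C_j(y)\subseteq I_j\) with \(v_y\in\Span\{v_i:i\in C_j(y)\}\). Draw a directed edge \(y\to z\), labelled \(j\), for each \(z\in C_j(y)\); then \(I_j-z+y\) is independent. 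Let \(R\) be the set of indices reachable from \(x\) by directed paths.

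\textbf{Case 1: some \(y\in R\) can be freely added to a class \(j\)}, i.e.\ \(v_y\notin\Span\{v_i:i\in I_j\}\). Take a shortest path \(x=y_0\to y_1\to\cdots\to y_r=y\) with this property. Perform the swaps along it: \(y_{t}\) replaces \(y_{t+1}\) in the class labelling that edge, and \(y_r\) is added to class \(j\). This covers one more index, contradicting maximality, provided every modified class stays independent. This is the main obstacle. When a single class receives several swaps along the path, one must show the simultaneous exchange \(I_j-\{z_1,\dots,z_m\}+\{w_1,\dots,w_m\}\) is independent. The standard tool is the unique-perfect-matching lemma: shortness of the path means no edge \(w_a\to z_b\) with \(a<b\) (in path order) exists within that class. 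Hence the exchange bipartite graph restricted to these pairs has a unique perfect matching, which forces the exchanged set to be a basis of the same span, by an upper-triangular determinant or induction argument. I would prove this lemma carefully first, by induction on \(m\), since it is the only genuinely nontrivial step.

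\textbf{Case 2: no element of \(R\) can be freely added to any class.} Then for every \(y\in R\) and every \(j\), \(v_y\in\Span\{v_i:i\in I_j\}\), and in fact \(C_j(y)\subseteq R\) by closure of reachability. Hence \(v_y\in\Span\{v_i:i\in R\cap I_j\}\). Let \(L=\Span\{v_i:i\in R\}\). For each \(j\), \(\{v_i:i\in R\cap I_j\}\) is independent and spans \(L\), so \(|R\cap I_j|=\dim L\). Since \(x\in R\) is uncovered and the \(I_j\) are disjoint,
\[
|\{i:v_i\in L\}|\ \ge\ |R|\ \ge\ 1+\sum_{j=1}^k|R\cap I_j| \;=\; 1+k\dim L,
\]
contradicting the hypothesis. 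Therefore no uncovered index exists, and \(I=I_1\sqcup\cdots\sqcup I_k\) is the desired partition.
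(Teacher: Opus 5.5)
Your proof is correct. It is a different route only in the sense that the paper gives no proof of this statement: it quotes Rado--Horn as a classical black box, citing Casazza--Peterson for the indexed-list form, and uses it only inside Lemma~\ref{lem:complement-radohorn}.

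What you give is Edmonds' augmenting-path proof of the matroid partition theorem, specialized to the linear matroid of $(v_i)_{i\in I}$. Compared with citing, it makes the argument self-contained. It handles repeated vectors directly, so the footnote's appeal to Casazza--Peterson becomes unnecessary. It is algorithmic, and it extends to arbitrary matroids once the exchange lemma is proved in matroid form. The price is the exchange-graph bookkeeping.

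The step you flag as the only nontrivial one is a one-liner in the linear setting:
\begin{itemize}
\item Write $v_{w_a}=\sum_{i\in I_j}c_{a,i}v_i$. An edge $w_a\to z_b$ exists exactly when $c_{a,z_b}\neq 0$.
\item The set $(I_j\setminus\{z_1,\dots,z_m\})\cup\{w_1,\dots,w_m\}$ is independent, with no repeated vectors, exactly when $\det(c_{a,z_b})_{a,b}\neq 0$.
\item The no-shortcut property follows from minimality of $r$, because a shortcut still ends at the same free vertex $y_r$. It makes this matrix lower triangular with nonzero diagonal.
\end{itemize}

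When you write it up, fix three small points.
\begin{itemize}
\item The class receiving $y_r$ may itself have undergone swaps along the path, and you should say so explicitly. The swapped set is a basis of the same span $\Span\{v_i:i\in I_j\}$, and $v_{y_r}$ lies outside that span, so adding $y_r$ keeps the class independent.
\item In Case 2, $C_j(y)$ is defined only for $y\notin I_j$. For $y\in R\cap I_j$ the containment $v_y\in\Span\{v_i:i\in R\cap I_j\}$ is trivial, so state that case separately.
\item $C_j(y)$ is a circuit with $y$ removed, not itself a circuit.
\end{itemize}
The preliminary remarks about zero vectors and multiplicities are harmless but not needed.
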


The necessity of the condition is immediate: if such a partition exists, then
for any subspace $L$ each class $I_j$ contributes at most $\dim L$ indices
to $\{i \in I : v_i \in L\}$, and summing over the $k$ classes gives
$|\{i \in I : v_i \in L\}| \leq k \dim L$.
The theorem asserts that this necessary condition is also sufficient.

We will use Rado--Horn via the following consequence, which is something of a dual formulation. The key feature is that the spanning condition in
the conclusion is imposed on a specified collection of vectors, not on the
entire subspace they generate; this distinction is what makes the lemma
applicable in our setting.

\begin{lemma}\label{lem:complement-radohorn}
Let $X$ be finite, let $(v_x)_{x\in X}$ be a finite indexed list of vectors
spanning a finite-dimensional vector space $W$ over $\F$, and let $k\geq 2$.
Suppose that for every proper subspace $L<W$,
$$
        |\{x\in X:v_x\notin L\}|
        \geq
        \frac{k}{k-1}\bigl(\dim W-\dim L\bigr).
$$
Then $X$ admits a partition $X=X_1\sqcup\cdots\sqcup X_k$ such that, for each
$j$,
$$
        \Span\{v_x:x\notin X_j\}=W.
$$
\end{lemma}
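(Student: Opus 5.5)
The plan is to pass to the dual matroid. A set $S\subseteq X$ has the property that $\{v_x:x\notin S\}$ spans $W$ exactly when $S$ is independent in the dual matroid of $(v_x)_{x\in X}$. The dual of a matroid represented over $\F$ is again represented over $\F$, so we can apply the Rado--Horn theorem to a dual representation $(w_x)_{x\in X}$ with $k$ classes. Each resulting class $X_j$ then has $\{w_x:x\in X_j\}$ independent, which is the required spanning statement for its complement.

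\textbf{Construction of the dual representation.} Let $n=|X|$, $r=\dim W$, and let $\pi:\F^X\to W$ be the linear map $e_x\mapsto v_x$. It is onto, so $K=\ker\pi$ has dimension $n-r$. Let $w_x\in K^*$ be the restriction of the $x$-th coordinate functional to $K$. For $S\subseteq X$, $\dim\Span\{w_x:x\in S\}$ equals the rank of the restriction map $K\to\F^S$. That rank is
$$
\dim K-\dim\bigl(K\cap\F^{X\setminus S}\bigr)=(n-r)-\bigl(|X\setminus S|-\rho(X\setminus S)\bigr)=|S|-r+\rho(X\setminus S),
$$
where $\rho(Y)=\dim\Span\{v_x:x\in Y\}$ and we used that $K\cap\F^{X\setminus S}$ is the kernel of $\pi$ restricted to $\F^{X\setminus S}$. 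Call this quantity $\rho^*(S)$. In particular, $\{w_x:x\in S\}$ is linearly independent, i.e.\ $\rho^*(S)=|S|$, if and only if $\rho(X\setminus S)=r$, i.e.\ $\Span\{v_x:x\notin S\}=W$.

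\textbf{Verifying the Rado--Horn hypothesis.} Let $L'\le K^*$ and set $S=\{x:w_x\in L'\}$. Then $\dim L'\ge\rho^*(S)$, so it suffices to show $|S|\le k\rho^*(S)$.

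Put $L=\Span\{v_x:x\notin S\}$, so that $\rho^*(S)=|S|-(r-\dim L)$.
\begin{itemize}
\item If $L=W$, then $\rho^*(S)=|S|$ and the bound is trivial.
\item If $L<W$ is proper, every $x$ with $v_x\notin L$ lies in $S$. The hypothesis then gives $|S|\ge\frac{k}{k-1}(r-\dim L)$, i.e.\ $k(r-\dim L)\le(k-1)|S|$. This is exactly $|S|\le k\bigl(|S|-(r-\dim L)\bigr)=k\rho^*(S)$.
\end{itemize}

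\textbf{Conclusion.} The Rado--Horn theorem, applied to the indexed list $(w_x)$ in $K^*$, gives a partition $X=X_1\sqcup\cdots\sqcup X_k$ with each $\{w_x:x\in X_j\}$ independent. By the equivalence above, $\Span\{v_x:x\notin X_j\}=W$ for each $j$.

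The main obstacle is the dual-rank identity $\rho^*(S)=|S|-r+\rho(X\setminus S)$ together with its independence criterion, since everything else is bookkeeping. I would check it by the dimension count above, using $\F^X/K\cong W$ and the fact that $K+\F^{X\setminus S}=\F^X$ holds iff $\pi(\F^{X\setminus S})=W$. One edge case needs a remark. Rado--Horn quantifies over all subspaces $L'$, while the hypothesis here only concerns proper $L<W$. This is handled by the trivial case $L=W$ above, which covers in particular $L'=0$, where $S$ is the set of $x$ with $w_x=0$ and $\Span\{v_x:x\notin S\}=W$ forces $S=\emptyset$.
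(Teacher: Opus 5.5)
Your proposal is correct and follows essentially the same route as the paper. Your functionals $w_x\in K^*$ are a coordinate-free version of the paper's columns $u_x$ of the kernel-basis matrix, and your identity $\rho^*(S)=|S|-r+\rho(X\setminus S)$ is the paper's rank identity; the case split $L=W$ versus $L<W$ is also identical, and since it already covers every subspace $L'\le K^*$, your closing edge-case remark is superfluous.
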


\begin{proof}
Put $d = \dim W$. Define the linear map $T : \F^X \to W$ by
$T(\mathbf{f}) = \sum_{x \in X} \mathbf{f}_x v_x$, where $\mathbf{f}_x$
denotes the $x$-th coordinate of $\mathbf{f} \in \F^X$. Since the $v_x$ span
$W$, the map $T$ is surjective, so the rank-nullity theorem gives $\dim\ker T = |X| - d$.

Fix a basis $\mathbf{g}_1, \ldots, \mathbf{g}_{|X|-d}$ of $\ker T \leq \F^X$,
where $(\mathbf{g}_i)_x \in \F$ denotes the $x$-th coordinate of $\mathbf{g}_i$,
so that $\sum_{x \in X}(\mathbf{g}_i)_x v_x = 0$ for each $i$.
Form the $(|X|-d) \times |X|$ matrix $K$ whose rows are $\mathbf{g}_1, \ldots,
\mathbf{g}_{|X|-d}$:
$$
K \;=\;
\bordermatrix{
          & x_1                        & x_2                        & \cdots & x_{|X|}                        \cr
\mathbf{g}_1     & (\mathbf{g}_1)_{x_1}        & (\mathbf{g}_1)_{x_2}        & \cdots & (\mathbf{g}_1)_{x_{|X|}}        \cr
\mathbf{g}_2     & (\mathbf{g}_2)_{x_1}        & (\mathbf{g}_2)_{x_2}        & \cdots & (\mathbf{g}_2)_{x_{|X|}}        \cr
\vdots           & \vdots                      & \vdots                      & \ddots & \vdots                          \cr
\mathbf{g}_{|X|-d} & (\mathbf{g}_{|X|-d})_{x_1} & (\mathbf{g}_{|X|-d})_{x_2} & \cdots & (\mathbf{g}_{|X|-d})_{x_{|X|}} \cr
}
$$
for any enumeration $x_1,\ldots,x_{|X|}$ of $X$. Column $x$ of $K$ is the vector
$$u_x =
((\mathbf{g}_1)_x, (\mathbf{g}_2)_x, \ldots, (\mathbf{g}_{|X|-d})_x)^\top
\in \F^{|X|-d}.$$
We apply the Rado--Horn theorem to the indexed list
$(u_x)_{x \in X} \subset \F^{|X|-d}$.

For $C \subseteq X$, write $D = X\setminus C$ and let $K_C$ denote the
$(|X|-d)\times|C|$ submatrix of $K$ with columns indexed by $C$. We claim
\begin{equation}\label{eq:rankid}
        \dim\Span\{u_x : x \in C\} \;=\; |C| - d + \dim\Span\{v_x : x \in D\}.
\end{equation}
We prove this by computing the rank of $K_C$ two ways. The column rank of
$K_C$ equals $\dim\Span\{u_x : x \in C\}$ since the columns of $K_C$ are
exactly the $u_x$ for $x \in C$. For the row rank, note that row $i$ of $K_C$
is $\mathbf{g}_i$ restricted to its $C$-coordinates. A linear combination
$\sum_i \lambda_i \mathbf{g}_i$ restricts to zero on $C$ if and only if the
vector $\mathbf{g} = \sum_i \lambda_i \mathbf{g}_i$ has $(\mathbf{g})_x = 0$
for all $x \in C$, i.e., all nonzero coordinates of $\mathbf{g}$ lie in $D$.
Since each $\mathbf{g}_i \in \ker T$ and $\ker T$ is a subspace, $\mathbf{g}
\in \ker T$, meaning $T(\mathbf{g}) = 0$. Since $\mathbf{g}$ is supported on
$D$, only the $D$-coordinates contribute to $T(\mathbf{g})$, so
$\sum_{x\in D}(\mathbf{g})_x v_x = 0$.

Conversely, any $\mathbf{h} \in \F^D$ with $\sum_{x\in D}\mathbf{h}_x v_x = 0$
extends to an element of $\ker T$ supported on $D$, hence with all
$C$-coordinates zero, giving a combination of rows that vanishes on $C$.
Therefore the combinations
$\sum_i\lambda_i\mathbf{g}_i$ vanishing on $C$ correspond exactly to
$\ker(T_D)$, where $T_D : \F^D \to W$ is defined by $T_D(\mathbf{h}) =
\sum_{x\in D}\mathbf{h}_x v_x$. Applying the rank-nullity theorem to $T_D : \F^D \to W$,
$$
        \dim\ker(T_D) = |D| - \dim\Span\{v_x : x\in D\}.
$$
Since $\ker T$ has dimension $|X|-d$, the rank-nullity theorem gives
$$
        \mathrm{rowrank}(K_C)
        = (|X|-d) - \dim\ker(T_D)
        = (|X|-d) - |D| + \dim\Span\{v_x:x\in D\}.
$$
Since row rank equals column rank and $|D| = |X|-|C|$, this gives~\eqref{eq:rankid}.

We now verify the Rado--Horn hypothesis, i.e., that $|C| \leq
k\,\dim\Span\{u_x : x \in C\}$ for every $C \subseteq X$. Fix $C$ and let $L
= \Span\{v_x : x \in D\} \leq W$. If $L = W$, then \eqref{eq:rankid} gives
$\dim\Span\{u_x : x \in C\} = |C|$ and the inequality is trivial. If $L < W$,
every $x$ with $v_x \notin L$ lies in $C$, so the hypothesis of the lemma
gives $|C| \geq \frac{k}{k-1}(d - \dim L)$. Since \eqref{eq:rankid} gives
$\dim\Span\{u_x : x \in C\} = |C| - d + \dim L$, this rearranges to
$$
        |C| \leq k(|C| - d + \dim L) = k\,\dim\Span\{u_x : x \in C\}.
$$
The Rado--Horn theorem therefore partitions $X$ into classes $X_1, \ldots, X_k$ with each
indexed list $(u_x)_{x \in X_j}$ linearly independent. Applying \eqref{eq:rankid}
with $C = X_j$ and using $\dim\Span\{u_x : x \in X_j\} = |X_j|$ gives
$\dim\Span\{v_x : x \notin X_j\} = d = \dim W$, hence
$\Span\{v_x : x \notin X_j\} = W$.
\end{proof}

\begin{theorem}\label{thm:supportpartition}
Let $X$ be finite, let $\mathcal C\subseteq \F^X\setminus\{0\}$, and, for
$Y\subseteq X$, define
$$
        \lambda_{\mathcal C}(Y)
        =
        \dim\Span\{c\in\mathcal C:\operatorname{supp} c\subseteq Y\}.
$$
Let $k\geq 2$. If $\lambda_{\mathcal C}(Y) \leq \frac{k-1}{k}|Y|$ for every
$Y\subseteq X$, then $X$ admits a partition $X=X_1\sqcup\cdots\sqcup X_k$
such that no vector $c\in\mathcal C$ has support contained in a single
class $X_j$.
\end{theorem}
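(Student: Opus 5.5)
The plan is to deduce the theorem from Lemma~\ref{lem:complement-radohorn} by choosing the vectors $v_x$ so that ``$\Span\{v_x:x\notin X_j\}=W$'' forbids any $c\in\mathcal C$ from being supported in $X_j$.

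\textbf{Construction.} Regard $\mathcal C$ as an indexed list of rows. For $x\in X$, let $v_x=(c_x)_{c\in\mathcal C}\in\F^{\mathcal C}$ be the column of the matrix with rows $c\in\mathcal C$. Let $W=\Span\{v_x:x\in X\}$. Its dimension is the rank of the matrix, namely $\dim\Span\mathcal C=\lambda_{\mathcal C}(X)=:d$.

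\textbf{The conclusion is strong enough.} Suppose some $c\in\mathcal C$ had $\operatorname{supp}c\subseteq X_j$. The coordinate functional $\pi_c$ on $\F^{\mathcal C}$ vanishes on every $v_x$ with $x\notin X_j$. If those $v_x$ span $W$, then $\pi_c$ vanishes on all of $W$. It then vanishes on every $v_x$, forcing $c=0$, which is a contradiction. So it suffices to verify the hypothesis of Lemma~\ref{lem:complement-radohorn} and apply it with the given $k$.

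\textbf{Checking the hypothesis.} Fix a proper subspace $L<W$, and put $Y=\{x:v_x\notin L\}$ and $Z=X\setminus Y$. Then $\dim\Span\{v_z:z\in Z\}\le\dim L$. By the row-rank/column-rank identity used in the proof of \eqref{eq:rankid}, this column span has dimension $d-\dim(V\cap\F^Y)$, where $V=\Span\mathcal C$ and $\F^Y$ denotes vectors supported in $Y$. Hence $d-\dim L\le\dim(V\cap\F^Y)$. We need $|Y|\ge\frac{k}{k-1}(d-\dim L)$, and the hypothesis $\lambda_{\mathcal C}(Y)\le\frac{k-1}{k}|Y|$ would give this at once if $\dim(V\cap\F^Y)=\lambda_{\mathcal C}(Y)$.

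\textbf{Main obstacle.} In general only $\lambda_{\mathcal C}(Y)\le\dim(V\cap\F^Y)$ holds: combinations of relations not supported in $Y$ can cancel outside $Y$. Closing this gap is the crux.

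My first attempt would be an induction on $|X|$ (or on $d$). If for every $Y$ the two quantities agree, the argument above finishes. Otherwise, I would pick a set $Y$ where $\dim(V\cap\F^Y)>\lambda_{\mathcal C}(Y)$, chosen extremal (say, inclusion-minimal). I would then try to replace $\mathcal C$ by a modified family $\mathcal C'$ with the following properties:
\begin{itemize}
\item every $c\in\mathcal C$ still has some $c'\in\mathcal C'$ with $\operatorname{supp}c'\subseteq\operatorname{supp}c$, so a partition good for $\mathcal C'$ is good for $\mathcal C$;
\item $\lambda_{\mathcal C'}\le\lambda_{\mathcal C}$ on all subsets, so the hypothesis is preserved;
\item the span of $\mathcal C'$ has strictly smaller dimension, or the discrepancy is strictly reduced.
\end{itemize}

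If this replacement does not work directly, the fallback is to split $X$ along $Y$. I would partition $Y$ using the relations supported in $Y$, and separately partition $X\setminus Y$ using the relations projected to $X\setminus Y$. The goal is to show that the hypothesis transfers to both pieces, using supermodularity-type inequalities for $\lambda_{\mathcal C}$ under unions and intersections. Gluing the two partitions class by class would then finish the proof. I expect most of the work to lie in making this transfer of the counting hypothesis rigorous.
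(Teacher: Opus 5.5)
\textbf{Verdict: genuine gap.} Several pieces of your setup are correct:
\begin{itemize}
\item the columns $v_x$;
\item the observation that $\Span\{v_x:x\notin X_j\}=W$ keeps every $c\in\mathcal C$ out of $X_j$;
\item the identity $\dim\Span\{v_z:z\in Z\}=d-\dim(V\cap\F^Y)$.
\end{itemize}
You also locate the obstacle correctly. But the proposal stops at the crux: neither $\mathcal C'$ nor the split is specified, and no transfer of the counting hypothesis is proved.

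The obstacle is not an artifact of a lossy estimate. Applied to all columns, Lemma~\ref{lem:complement-radohorn} would keep every nonzero vector of $V=\Span\mathcal C$ out of each class, and $V$ can contain vectors of support one. Take $k=2$, $X=\{1,\dots,12\}$, and $\mathcal C=\{a,b,c\}$ with $a=e_1+\dots+e_{10}$, $b=e_{11}+e_{12}$, $c=a+e_{12}$. The hypothesis holds, yet $e_{12}=c-a\in V$. So the direct application is impossible here, not merely unproved.

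Your suggested extremal choice also misfires in this example. $Y=\{12\}$ is an inclusion-minimal discrepancy set, and it equals $\{x:v_x\notin L\}$ for the violating plane $L=\Span\{v_1,v_{11}\}$. But the projection of $b$ to $X\setminus Y$ is $e_{11}$, so no partition of $X\setminus Y$ handles the projected family. The $\lambda$-hypothesis does not transfer to projected relations for an arbitrary split, so no supermodularity argument can rescue it.

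\textbf{The missing idea.} The paper makes the extremal choice on the subspace side, using minimality.
\begin{itemize}
\item Let $X^+=\{x:v_x\ne0\}$; the other points lie in no support and can go anywhere. For $U\le W$ put $X_U=\{x\in X^+:v_x\in U\}$.
\item Choose a nonzero $U$ of minimal dimension with $|X_U|\ge\frac{k}{k-1}\dim U$. Such a $U$ exists, since $U=W$ qualifies by the hypothesis at $Y=X^+$.
\item Minimality alone verifies the hypothesis of Lemma~\ref{lem:complement-radohorn} for $(v_x)_{x\in X_U}$ in $U$. For $0\ne L<U$, minimality gives $|X_L|<\frac{k}{k-1}\dim L$, hence $|X_U\setminus X_L|>\frac{k}{k-1}(\dim U-\dim L)$. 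The case $L=0$ is the defining inequality, and the columns in $X_U$ span $U$, again by minimality.
\item The lemma then partitions $X_U$ so that every $c$ which is nonzero somewhere on $X_U$ is protected, because then $\pi_c$ does not vanish on $U$.
\item Every other $c$ vanishes on $X_U$, so it is genuinely supported in $X^+\setminus X_U$. The $\lambda$-hypothesis restricts to these verbatim, and induction on $|X|$ partitions $X^+\setminus X_U$.
\item Glue the two partitions class by class.
\end{itemize}
In the example, $U=\Span\{v_1\}$ and $X_U=\{1,\dots,10\}$. This protects $a$ and $c$, while $b$ is handled by induction on $\{11,12\}$.

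Your fallback has the right shape: project onto $X_L$, and recurse on the relations supported off it. What makes it work is precisely this minimal choice of $L$. With that choice, the projected side needs no recursion at all, only Lemma~\ref{lem:complement-radohorn}.
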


\begin{proof}
We argue by induction on $|X|$. If $\mathcal C$ is empty the condition is
vacuously satisfied and any partition works.

The proof uses Lemma~\ref{lem:complement-radohorn}, which partitions an indexed list
of vectors in some ambient space $W$ into $k$ classes such that each class's
complement spans $W$. We want to apply this to produce a partition of $X$ such
that every $\mathbf{c} \in \mathcal C$ has a support point outside each
class. We assign to each $x \in X$ a vector $\mathbf{m}_x \in \F^r$
such that $x \in \operatorname{supp} \mathbf{c}$ if and only if
$\boldsymbol{\beta}_{\mathbf{c}} \cdot \mathbf{m}_x \neq 0$, where
$\boldsymbol{\beta}_{\mathbf{c}} \in \F^r$ is the coefficient vector of
$\mathbf{c}$. Given such vectors, if the complement of each class spans $\F^r$,
then for any nonzero $\boldsymbol{\beta}_{\mathbf{c}}$ there is a point outside
each class not orthogonal to $\boldsymbol{\beta}_{\mathbf{c}}$, which is
precisely a support point of $\mathbf{c}$ outside that class.

Let $\mathbf{b}_1, \ldots, \mathbf{b}_r$ be a basis of $R = \Span\mathcal C
\leq \F^X$, and form the $r \times |X|$ matrix whose rows are
$\mathbf{b}_1, \ldots, \mathbf{b}_r$, with $(\mathbf{b}_i)_x \in \F$ denoting
the $x$-th coordinate of $\mathbf{b}_i$. Its column $x$ is the vector
$\mathbf{m}_x = ((\mathbf{b}_1)_x, \ldots, (\mathbf{b}_r)_x)^\top \in \F^r$. Every
$\mathbf{c} \in \mathcal C$ lies in $R$, so $\mathbf{c} = \sum_{i=1}^r
\beta_i \mathbf{b}_i$ for unique coefficients, and the coefficient vector
$\boldsymbol{\beta}_{\mathbf{c}} = (\beta_1, \ldots, \beta_r)^\top \in \F^r$
satisfies
$$
        \mathbf{c}_x = \boldsymbol{\beta}_{\mathbf{c}} \cdot \mathbf{m}_x
        \quad \text{for every } x \in X.
$$
Thus $x \in \operatorname{supp} \mathbf{c}$ if and only if
$\boldsymbol{\beta}_{\mathbf{c}} \cdot \mathbf{m}_x \neq 0$, as required.
Any $x$ with $\mathbf{m}_x = 0$ satisfies $\mathbf{c}_x = 0$ for every
$\mathbf{c} \in \mathcal C$ and may be placed in any class; we therefore
restrict attention to $X^+ = \{x \in X : \mathbf{m}_x \neq 0\}$. The
hypothesis applied to $Y = X^+$ gives $|X^+| \geq \frac{k}{k-1}r$, and the
columns $(\mathbf{m}_x)_{x \in X^+}$ span $\F^r$, since otherwise some
nontrivial linear combination of the $\mathbf{b}_i$ would vanish on all of $X$.

We would like to apply Lemma~\ref{lem:complement-radohorn} directly to
$(\mathbf{m}_x)_{x \in X^+}$ in $\F^r$, but the lemma's hypothesis may fail
if some proper subspace $L \subsetneq \F^r$ contains too many
columns $\mathbf{m}_x$. We start by looking for the smallest subspace $W \leq \F^r$ such that the
columns landing in $W$ satisfy the lemma's hypothesis within $W$. We will apply
Lemma~\ref{lem:complement-radohorn} on $W$. Vectors that vanish on $W$ are passed to the
induction step. For each subspace $U \leq \F^r$, we write $X_U = \{x \in X^+ :
\mathbf{m}_x \in U\}$.
Choose $W \leq \F^r$ of minimal positive dimension satisfying $|X_W| \geq
\frac{k}{k-1}\dim W$. Since $\F^r$ satisfies this by the bound on
$|X^+|$, such a $W$ must exist.

We verify the hypothesis of Lemma~\ref{lem:complement-radohorn} for the indexed list
$(\mathbf{m}_x)_{x \in X_W}$ in the space $W$. Since every $x \in X_W
\subseteq X^+$ has $\mathbf{m}_x \neq 0$, no column lies in the zero subspace,
so all $|X_W| \geq \frac{k}{k-1}\dim W$ columns lie outside $\{0\}$, completing
the case $L = 0$. For any nonzero proper subspace $L \subsetneq W$, minimality
of $W$ gives $|X_L| < \frac{k}{k-1}\dim L$, and since $X_L \subseteq X_W$,
$$
        |\{x \in X_W : \mathbf{m}_x \notin L\}|
        = |X_W| - |X_L|
        > \frac{k}{k-1}(\dim W - \dim L).
$$
The vectors $(\mathbf{m}_x)_{x \in X_W}$ also span $W$, since if they spanned
only a proper subspace $W' \subsetneq W$ then $X_{W'} = X_W$ and $W'$ would
satisfy the defining inequality with smaller dimension, contradicting
minimality. Lemma~\ref{lem:complement-radohorn} therefore yields a partition
$X_W = P_1 \sqcup \cdots \sqcup P_k$ such that
\begin{equation}\label{eq:spans}
        \Span\{\mathbf{m}_x : x \in X_W \setminus P_j\} = W
        \quad \text{for every } j.
\end{equation}

We now show that~\eqref{eq:spans} gives the support condition for every
$\mathbf{c} \in \mathcal C$ that is not identically zero on $X_W$. Fix such
a $\mathbf{c}$, with coefficient vector $\boldsymbol{\beta}_{\mathbf{c}}$.
Since $\mathbf{c}$ is not identically zero on $X_W$, there exists $x_0 \in
X_W$ with $\boldsymbol{\beta}_{\mathbf{c}} \cdot \mathbf{m}_{x_0} \neq 0$,
so $\boldsymbol{\beta}_{\mathbf{c}}$ does not vanish on all of $W$. The set
$H = \{v \in W : \boldsymbol{\beta}_{\mathbf{c}} \cdot v = 0\}$ is therefore a
proper subspace of $W$. For each $j$,~\eqref{eq:spans} gives us that
$\{\mathbf{m}_x : x \in X_W \setminus P_j\}$ spans $W$, so these columns are
not all contained in $H$, and hence some $x_j \in X_W \setminus P_j$ satisfies
$\boldsymbol{\beta}_{\mathbf{c}} \cdot \mathbf{m}_{x_j} \neq 0$, meaning
$x_j \in \operatorname{supp} \mathbf{c} \setminus P_j$.

It remains to consider the vectors in $\mathcal C$ that vanish identically on
$X_W$. Their supports are contained in $Y = X^+ \setminus X_W$, and their
nonzero restrictions to $Y$ form a collection $\mathcal C_Y$. Fix $J \subseteq Y$.
An element of $\mathcal C_Y$ supported in $J$ is the restriction to $Y$ of
some $\mathbf{c}\in\mathcal C$ that vanishes on $X_W$. Since every vector in
$\mathcal C$ also vanishes on $X\setminus X^+$, and since the restriction of
$\mathbf{c}$ to $Y$ is supported in $J$, the original vector $\mathbf{c}$ is
supported in $J$ as a vector on $X$. Therefore
$\lambda_{\mathcal C_Y}(J) \leq \lambda_{\mathcal C}(J) \leq
\frac{k-1}{k}|J|$. Since $W \neq 0$ the set $X_W$ is nonempty, so $|Y| < |X|$,
and the induction hypothesis gives a partition $Y = Q_1 \sqcup \cdots \sqcup Q_k$
such that no vector in $\mathcal C_Y$ has support contained in a single $Q_j$.

Setting $Z_j = P_j \cup Q_j$ gives a partition of $X^+$. For any $\mathbf{c}
\in \mathcal C$ not identically zero on $X_W$, the preceding argument gives a
support point in $X_W \setminus P_j \subseteq Z_j^c$ for each $j$, so
$\operatorname{supp} \mathbf{c} \not\subseteq Z_j$. For any $\mathbf{c} \in
\mathcal C$ vanishing on $X_W$, its restriction lies in $\mathcal C_Y$, and
since $Z_j \cap Y = Q_j$ the inductive step gives $\operatorname{supp}
\mathbf{c} \not\subseteq Z_j$ for every $j$. Assigning the points of $X
\setminus X^+$ to any class completes the induction.
\end{proof}

\section{Proof of Theorem~\ref{thm:reduced}}

We now put the preceding results together. Let \(q=p^s\), let \(G=\Z_q^n\), and let \(A\subset G\) satisfy the hypothesis of Theorem~\ref{thm:reduced}; that is, every subset \(B\subseteq A\) contains a quasi-independent subset of size at least \(\delta |B|\).

\begin{proof}[Proof of Theorem~\ref{thm:reduced}]
Set
$$
        k=\left\lceil \frac{s\log_2(p)}{\delta}\right\rceil.
$$
If \(k=1\), then the hypothesis applied to \(B=A\) implies that \(A\) itself is quasi-independent, and there is nothing to prove. We may therefore assume \(k\geq 2\).

Let
$$
        \mathcal C=\underline{\mathcal E(A)}\subseteq \F_p^A
$$
be the collection of reductions modulo $p$ of the nonzero signed zero-relations in $A$. For $T\subseteq A$, we identify $\F_p^T$ with the coordinate subspace of $\F_p^A$ supported on $T$. With this convention, the vectors in $\mathcal C$ whose supports are contained in $T$ are exactly the vectors in $\underline{\mathcal E(T)}$: indeed, each nonzero signed coefficient is $\pm1$, hence remains nonzero after reduction modulo $p$. Therefore Corollary~\ref{cor:local-dim-bound} gives
$$
        \dim_{\F_p}\Span\{c\in\mathcal C:\operatorname{supp}c\subseteq T\}
        =
        \dim_{\F_p}\Span \underline{\mathcal E(T)}
        \leq \frac{k-1}{k}|T|
$$
for every $T\subseteq A$.

This is exactly the hypothesis of Theorem~\ref{thm:supportpartition}, applied over the field $\F_p$ with $X=A$ and with the forbidden collection $\mathcal C$. Hence Theorem~\ref{thm:supportpartition} gives a partition
$A=\bigsqcup_{j=1}^k A_j$ such that no vector in $\mathcal C$ has support contained entirely in one class $A_j$.

It remains only to translate this back from reduced relations to the original group \(G=\Z_q^n\). Suppose, for contradiction, that some class \(A_j\) is not quasi-independent. Then there is a nonzero choice of coefficients \(\epsilon_a\in\{-1,0,1\}\), supported on \(A_j\), such that
$$
        \sum_{a\in A_j}\epsilon_a a=0
        \qquad \text{in } \Z_q^n.
$$
Equivalently, this coefficient vector is an element of \(\mathcal E(A_j)\). Since each nonzero coefficient is \(\pm1\), its reduction modulo \(p\) is still nonzero. Therefore \(\underline{\epsilon}\) is a nonzero element of \(\underline{\mathcal E(A_j)}\) with support contained in \(A_j\), contradicting the partition property.

Thus every class \(A_j\) is quasi-independent, and we have decomposed
\(A\) as \(A=\bigsqcup_{j=1}^k A_j\) with \(k\leq \left\lceil s\log_2(p)/\delta\right\rceil\), as claimed.
\end{proof}

\section{Reduction from bounded torsion to prime powers}
\label{sec:bourgain-reduction}

We now explain how Theorem~\ref{thm:reduced}, which was proved for groups of the form $(\Z/p^s\Z)^n$, implies the general bounded torsion result.

We use the following projection theorem of Bourgain.

\begin{theorem}[Bourgain, Proposition 2.3 of \cite{B83}]
\label{thm:bourgain-reduction}
Let $G_1$ and $G_2$ be compact abelian groups, and identify
$$
(G_1\times G_2)^\wedge
=
\widehat{G_1}\times \widehat{G_2}.
$$
Let $\pi_i\colon \widehat{G_1}\times \widehat{G_2}\to \widehat{G_i}$ be the two coordinate projections. If $\Lambda\subseteq \widehat{G_1}\times \widehat{G_2}$ is Sidon, then $\Lambda$ is a finite union of sets $\Lambda_\alpha$ such that, for each $\alpha$, one of the two coordinate projections is one-to-one on $\Lambda_\alpha$, and the projected set $\pi_i(\Lambda_\alpha)$ is Sidon in $\widehat{G_i}$.
\end{theorem}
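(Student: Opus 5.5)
The plan is to treat \(\Lambda\subseteq\widehat{G_1}\times\widehat{G_2}\) as the edge set of a bipartite graph \(\mathcal G\), with vertex classes \(\widehat{G_1}\) and \(\widehat{G_2}\), where \((\gamma_1,\gamma_2)\in\Lambda\) joins \(\gamma_1\) to \(\gamma_2\). In this language, \(\pi_1\) is one-to-one on \(\Lambda_\alpha\) exactly when every \(\widehat{G_1}\)-vertex meets at most one edge of \(\Lambda_\alpha\), and similarly for \(\pi_2\). So the decomposition part of the statement becomes a bounded-orientation statement. We want to orient every edge towards one of its endpoints so that every vertex receives at most \(K\) edges. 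Given such an orientation, at each vertex we number its incoming edges \(1,\dots,K\). For \(i\in\{1,2\}\) and \(t\leq K\), we let \(\Lambda_{i,t}\) be the set of edges oriented into their \(\widehat{G_i}\)-endpoint and carrying label \(t\). Then \(\pi_i\) is injective on \(\Lambda_{i,t}\), and there are at most \(2K\) pieces.

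By Hakimi's orientation theorem, made infinite by a standard compactness (Rado selection) argument, such an orientation exists as soon as every finite subgraph satisfies
$$|\Lambda\cap(A\times B)|\leq K(|A|+|B|)$$
for all finite \(A\subseteq\widehat{G_1}\) and \(B\subseteq\widehat{G_2}\). I will obtain this bound from Pisier's arithmetic characterization (Theorem~\ref{thm:pisier}). That theorem gives a quasi-independent \(Q\subseteq\Lambda\cap(A\times B)\) with \(|Q|\geq\delta|\Lambda\cap(A\times B)|\).

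The key observation is that a quasi-independent set of edges contains no cycle. Any cycle in a bipartite graph has even length, say with edges \(e_1,\dots,e_{2\ell}\). The alternating sum \(e_1-e_2+e_3-\cdots-e_{2\ell}\) vanishes in \(\widehat{G_1}\times\widehat{G_2}\), because each vertex of the cycle appears once with each sign. This is a nontrivial signed zero-relation among distinct elements, which quasi-independence forbids. Hence \(Q\) is a forest on \(A\sqcup B\), so \(|Q|<|A|+|B|\). This gives the required bound with \(K=\lceil 1/\delta\rceil\).

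The main obstacle is the final clause, that each \(\pi_i(\Lambda_\alpha)\) is Sidon in \(\widehat{G_i}\). Injectivity of \(\pi_1\) on \(\Lambda_\alpha\) only lets us write \(\Lambda_\alpha=\{(a,\varphi(a)):a\in A_\alpha\}\) for some map \(\varphi\). Quasi-independence is not preserved by projection, since signed relations can be created in the first coordinate. Likewise, a trigonometric polynomial \(\sum a_\gamma\gamma\) on \(\Lambda_\alpha\) has no obvious norm comparison with \(\sum a_\gamma\pi_1(\gamma)\).

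My first attempt will be to refine the decomposition further, using Pisier's criterion once more. For finite \(F\subseteq A_\alpha\), the lift \(\{(a,\varphi(a)):a\in F\}\) contains a large quasi-independent set \(Q\). I would then look for a further bounded partition, for instance by greedily or randomly colouring the \(\widehat{G_2}\)-side, so that within each colour class every signed relation \(\sum\epsilon_a a=0\) in \(\widehat{G_1}\) forces \(\sum\epsilon_a\varphi(a)=0\). Quasi-independence of the lift would then pass to the projection.

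If this combinatorial route fails, I would instead follow Bourgain's approach. That approach averages the Sidon inequality over \(G_2\) against Riesz products or random characters adapted to \(\varphi\), in order to transfer the \(L^\infty(G_1\times G_2)\) estimate to an \(L^\infty(G_1)\) estimate. This transfer is the step I expect to require genuinely analytic (stochastic process) input.
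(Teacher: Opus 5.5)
Your argument for the injectivity half is sound. Around an even cycle of edges the alternating signed sum vanishes, so a quasi-independent set of edges is a forest. Theorem~\ref{thm:pisier} then gives $|\Lambda\cap(A\times B)|<(|A|+|B|)/\delta$. Hakimi's theorem plus compactness then gives an orientation with in-degree at most $\lceil 1/\delta\rceil$, hence $2\lceil 1/\delta\rceil$ pieces on each of which some $\pi_i$ is injective.

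That is not the substance of Bourgain's proposition, though. The clause that $\pi_i(\Lambda_\alpha)$ is Sidon is what Section~\ref{sec:bourgain-reduction} actually uses: it is what allows Pisier's characterization and Theorem~\ref{thm:reduced} to be applied inside $\Gamma_j$. Quasi-independence pulls back along an injective projection but does not push forward, so injectivity alone is useless there. Your proposal leaves this clause as an unexecuted list of strategies, so as it stands there is a genuine gap. (The paper does not reprove the statement; it relies on Bourgain's argument via Rider's random-sign characterization and Gaussian comparison.)

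Moreover, your first strategy cannot succeed as formulated, because the orientation step carries no information about which coordinate is the right one. Take $G_1=G_2=\mathbb{T}$ and $\Lambda=\{(n,2^n):n\geq 1\}\subseteq\Z^2$.
\begin{itemize}
\item Any signed relation among these points is a signed relation among distinct powers of $2$, so $\Lambda$ is quasi-independent, hence Sidon.
\item Every vertex of your graph has degree at most $1$. So orienting each edge into its $\widehat{G_1}$-endpoint is a legitimate output of your Hakimi step.
\item That orientation yields the single piece $\Lambda$, with $\pi_1$ injective and $\pi_1(\Lambda)=\{1,2,3,\dots\}$. This set is not Sidon, and by Drury's theorem it is not a finite union of Sidon sets.
\end{itemize}
Hence no finite refinement of this piece that keeps $\pi_1$ can work. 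Concretely, your colouring condition (first-coordinate relations force second-coordinate relations) would make every colour class quasi-independent in $\Z$, which is impossible for a finite partition of $\{1,2,3,\dots\}$.

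The correct choice in this example is $\pi_2$. In general the coordinate must be selected piece by piece according to the Sidon structure of $\Lambda$, not its incidence graph. Producing that selection together with a Sidon bound for each projected piece is exactly the analytic content of Bourgain's argument, and your second route only names it.

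Your relation-reflection idea would transfer Sidonicity via Pisier's theorem if you could produce such pieces. That is, you would need pieces $E$ with $\pi_i$ injective on $E$ and every signed relation in $\pi_i(E)$ lifting to one in $E$. Nothing in the proposal shows such pieces exist, even allowing the coordinate to vary.
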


Bourgain's proof uses Rider's characterization of Sidon sets via random signs and a Gaussian comparison argument. A similar argument in a related context appears in \cite{BL}. This theorem lets us reduce questions about Sidon sets in a product group to questions about Sidon sets in one coordinate group.

We will use the fact that quasi-independence pulls back through an injective coordinate projection. Indeed, let $E\subseteq \widehat{G_1}\times\widehat{G_2}$, and suppose that one coordinate projection, say $\pi_i$, is one-to-one on $E$. If $\pi_i(E)$ is quasi-independent, then $E$ is quasi-independent. If
$$
\sum_{\lambda\in E}\epsilon_\lambda \lambda=0,
\qquad \epsilon_\lambda\in{-1,0,1},
$$
then applying $\pi_i$ gives
$$
\sum_{\lambda\in E}\epsilon_\lambda \pi_i(\lambda)=0.
$$
Since $\pi_i$ is one-to-one on $E$, the elements $\pi_i(\lambda)$ are distinct. Since $\pi_i(E)$ is quasi-independent, all coefficients $\epsilon_\lambda$ are zero. Thus $E$ is quasi-independent. Consequently, if $\pi_i$ is one-to-one on $\Lambda_\alpha$, then any partition of $\pi_i(\Lambda_\alpha)$ into quasi-independent sets pulls back to a partition of $\Lambda_\alpha$ into quasi-independent sets.

Now let $\Gamma$ be a discrete abelian group of bounded torsion. Choose $N$ such that $N\gamma=0$ for every $\gamma\in\Gamma$, and write
$$
N=\prod_{j=1}^r p_j^{s_j}.
$$
For each $j$, let
$$
\Gamma_j={\gamma\in\Gamma:p_j^{s_j}\gamma=0}.
$$
The standard decomposition into the subgroups annihilated by the prime-power factors of $N$ gives
$$
\Gamma=\Gamma_1\times\cdots\times\Gamma_r.
$$
Thus every element of $\Gamma$ has a unique decomposition into components lying in the subgroups $\Gamma_j$.

Applying Bourgain's theorem iteratively in the dual notation, first to
$$
\Gamma_1\times(\Gamma_2\times\cdots\times\Gamma_r),
$$
then to the remaining product factor, and so on, we obtain a finite decomposition of $\Lambda$ into sets $\Lambda_\alpha$ with the following property: for each $\alpha$, there is an index $j$ such that the coordinate projection $\pi_j$ is one-to-one on $\Lambda_\alpha$, and $\pi_j(\Lambda_\alpha)$ is a Sidon subset of $\Gamma_j$.

By the observation above, once $\pi_j(\Lambda_\alpha)$ has been partitioned into quasi-independent sets, the same partition pulls back to a partition of $\Lambda_\alpha$ into quasi-independent sets. Therefore it is enough to prove the conclusion for Sidon subsets of groups of exponent dividing a fixed prime power.

It remains to connect this prime-power reduction with Theorem~\ref{thm:reduced}, which was stated for finite subsets of $(\Z/p^s\Z)^n$. Let $B$ be a finite subset of a group in which every element has order dividing $p^s$. Then $H=\langle B\rangle$ is a finite abelian group of exponent dividing $p^s$. Hence
$$
H\cong \prod_{\ell=1}^n \Z/p^{r_\ell}\Z
$$
with $1\leq r_\ell\leq s$ for every $\ell$. Each cyclic group $\Z/p^{r_\ell}\Z$ embeds into $\Z/p^s\Z$ by
$$
x \mapsto p^{s-r_\ell}x.
$$
Taking the product of these embeddings gives an injective homomorphism
$$
H\hookrightarrow (\Z/p^s\Z)^n.
$$
Because this map is an injective homomorphism, it preserves signed relations in both directions. Therefore a subset of $B$ is quasi-independent if and only if its image in $(\Z/p^s\Z)^n$ is quasi-independent.

\begin{proof}[Proof of Theorem~\ref{thm:main} from Theorem~\ref{thm:reduced}]
If $\Lambda$ is a finite union of quasi-independent sets, then $\Lambda$ is Sidon by the classical Riesz product argument and Drury's theorem \cite{Drury}.

Conversely, suppose $\Lambda$ is Sidon. Bourgain's theorem reduces $\Lambda$ to finitely many sets whose injective projections are Sidon subsets of groups of exponent dividing a fixed prime power. Since a finite union of finite decompositions is still finite, it is enough to prove the result for a Sidon set $\Lambda$ contained in a group $\Gamma$ of exponent dividing $p^s$, where $p^s$ is fixed.

By Pisier's arithmetic characterization, there is a constant $\delta>0$ such that every finite set $B\subseteq\Lambda$ contains a quasi-independent subset of size at least $\delta |B|$. Fix such a finite set $B$. The subgroup $\langle B\rangle$ embeds into $(\Z/p^s\Z)^n$ for some $n$, and this embedding preserves quasi-independence. Hence the image of $B$ satisfies the hypothesis of Theorem~\ref{thm:reduced} with the same value of $\delta$. Therefore the image of $B$, and hence $B$ itself, admits a partition into at most
$$
M=\left\lceil \frac{s\log_2 p}{\delta}\right\rceil
$$
quasi-independent parts. The important point is that $M$ depends only on $p^s$ and $\delta$, not on the finite set $B$.

It remains to pass from finite subsets of $\Lambda$ to $\Lambda$ itself. Let $[M]={1,\ldots,M}$, and consider all assignments
$$
\varphi:\Lambda\to [M].
$$
Such an assignment determines a partition of $\Lambda$ into the parts $\varphi^{-1}(1),\ldots,\varphi^{-1}(M)$.

For a finite set $B\subseteq\Lambda$, say that an assignment $\varphi$ is good on $B$ if each set
$$
B\cap \varphi^{-1}(j),
\qquad 1\leq j\leq M,
$$
is quasi-independent. We have just shown that for every finite $B\subseteq\Lambda$, there is at least one assignment good on $B$. More is true: if $B_1,\ldots,B_t$ are finitely many finite subsets of $\Lambda$, then there is one assignment that is good on all of them at once. Indeed, apply the finite result to $B_1\cup\cdots\cup B_t$, and then extend the resulting assignment arbitrarily to the rest of $\Lambda$.

Now use compactness of the product space $[M]^\Lambda$. Since $[M]$ is finite, $[M]^\Lambda$ is compact in the product topology. The preceding paragraph says that every finite list of finite requirements can be satisfied simultaneously. Compactness therefore gives a single assignment $\varphi:\Lambda\to[M]$ that is good on every finite subset $B\subseteq\Lambda$.

We claim that the parts $\varphi^{-1}(1),\ldots,\varphi^{-1}(M)$ are quasi-independent. If one part contained a nontrivial signed zero-relation, that relation would involve only finitely many elements of $\Lambda$. Taking $B$ to be this finite set of elements would contradict the fact that $\varphi$ is good on every finite subset. Hence every part is quasi-independent, and $\Lambda$ is a union of at most $M$ quasi-independent sets.
\end{proof}


\begin{thebibliography}{99}

\bibitem{B83}
J.~Bourgain,
\emph{Propri\'et\'es de d\'ecomposition pour les ensembles de Sidon},
Bull.\ Soc.\ Math.\ France \textbf{111} (1983), no.~4, 421--428.

\bibitem{Brp}
J.~Bourgain,
\emph{Sidon sets and Riesz products},
Ann.\ Inst.\ Fourier (Grenoble) \textbf{35} (1985), no.~1, 137--148.

\bibitem{Bourgain1985ArithmeticDiameter}
J.~Bourgain,
\emph{Subspaces of $L^\infty_N$, arithmetical diameter and Sidon sets},
in \emph{Probability in Banach Spaces V},
Lecture Notes in Math., vol.~1153, Springer, Berlin, 1985, 96--127.

\bibitem{Bourgain2001LambdaP}
J.~Bourgain,
\emph{$\Lambda_p$-sets in analysis: results, problems and related aspects},
in \emph{Handbook of the Geometry of Banach Spaces, Vol.~I},
North-Holland, Amsterdam, 2001, 195--232.

\bibitem{BL}
J.~Bourgain and M.~Lewko,
\emph{Sidonicity and variants of Kaczmarz's problem},
Ann.\ Inst.\ Fourier (Grenoble) \textbf{67} (2017), no.~3, 1321--1352.

\bibitem{CasazzaPeterson2012}
P.~G.~Casazza and J.~Peterson,
\emph{An elementary, illustrative proof of the Rado--Horn theorem},
Linear Algebra Appl.\ \textbf{437} (2012), no.~10, 2523--2537.

\bibitem{Drury}
S.~W.~Drury,
\emph{Unions of sets of interpolation},
in \emph{Conference on Harmonic Analysis},
Lecture Notes in Math., vol.~266,
Springer, Berlin, 1972, 23--33.

\bibitem{GH}
C.~C.~Graham and K.~E.~Hare,
\emph{Interpolation and Sidon Sets for Compact Groups},
CMS Books in Mathematics,
Springer, New York, 2013.

\bibitem{H}
A.~Horn,
\emph{A characterization of unions of linearly independent sets},
J.\ London Math.\ Soc.\ \textbf{30} (1955), 494--496.

\bibitem{LP}
L.-\AA.~Lindahl and F.~Poulsen, editors,
\emph{Thin sets in harmonic analysis},
Lecture Notes in Pure and Applied Mathematics, vol.~2,
Marcel Dekker, New York, 1971.

\bibitem{LR}
J.~M.~L\'opez and K.~A.~Ross,
\emph{Sidon Sets},
Lecture Notes in Pure and Applied Mathematics, Vol.~13,
Marcel Dekker, Inc., New York, 1975.

\bibitem{MM}
M.-P.~Malliavin-Brameret and P.~Malliavin,
\emph{Caract\'erisation arithm\'etique d'une classe d'ensembles de Helson},
C.~R.\ Acad.\ Sci.\ Paris S\'er.\ A--B \textbf{264} (1967), A192--A193.

\bibitem{Pisier1981Sidon}
G.~Pisier,
\emph{De nouvelles caract\'erisations des ensembles de Sidon},
in \emph{Mathematical Analysis and Applications, Part B},
Adv.\ in Math.\ Suppl.\ Stud., vol.~7B, Academic Press, New York, 1981, 685--726.

\bibitem{Pisier1983Arithmetic}
G.~Pisier,
\emph{Arithmetic characterizations of Sidon sets},
Bull.\ Amer.\ Math.\ Soc.\ (N.S.) \textbf{8} (1983), no.~1, 87--89.

\bibitem{Pisier1983Entropy}
G.~Pisier,
\emph{Conditions d'entropie et caract\'erisations arithm\'etiques des ensembles de Sidon},
in \emph{Topics in Modern Harmonic Analysis, Vol.~II} (Torino/Milano, 1982),
Ist.\ Naz.\ Alta Mat.\ Francesco Severi, Rome, 1983, 911--944.

\bibitem{Pisier2016Subgaussian}
G.~Pisier,
\emph{Subgaussian sequences in probability and Fourier analysis},
Graduate J.\ Math.\ \textbf{1} (2016), 59--78.

\bibitem{R}
R.~Rado,
\emph{A theorem on independence relations},
Q.\ J.\ Math.\ Oxford Ser.\ \textbf{13} (1942), 83--89.

\bibitem{V}
N.~Th.~Varopoulos,
\emph{Some combinatorial problems in harmonic analysis},
lecture notes by D.~Salinger from a course at the Summer School in Harmonic Analysis,
University of Warwick, July 1--13, 1968,
Mathematics Institute, University of Warwick.

\end{thebibliography}
\end{document}